\documentclass{cmslatex}
\usepackage[paperwidth=7in, paperheight=10in, margin=.875in]{geometry}
 \usepackage[backref,colorlinks,linkcolor=red,anchorcolor=green,citecolor=blue]{hyperref}
\usepackage{amsfonts,amssymb}
\usepackage{amsmath}
\usepackage{graphicx}
\usepackage{cite}
\usepackage{enumerate}
\usepackage{color,float,verbatim,mdframed,bm,booktabs,algorithm,algorithmic,subfig}
\usepackage{tabularx,mathrsfs,siunitx}
\usepackage{multirow}
\usepackage{dutchcal}
\usepackage{caption}
\usepackage{dsfont}
\usepackage{lscape}

\sloppy

\thinmuskip = 0.5\thinmuskip \medmuskip = 0.5\medmuskip
\thickmuskip = 0.5\thickmuskip \arraycolsep = 0.3\arraycolsep

\newtheorem{Remark}{Remark}[section]
\newtheorem{Assumption}{Assumption}[section]

\DeclareMathOperator{\sgn}{sgn}

\newcommand{\calH}{\mathcal{H}}

\newcommand{\calE}{\mathcal{E}}
\newcommand{\calF}{\mathcal{F}}
\newcommand{\calJ}{\mathcal{J}}

\newcommand{\set}[1]{\left\{#1\right\}}
\newcommand{\abs}[1]{\left\vert#1\right\vert}

\newcommand{\RR}{\mathbb{R}}

\newcommand{\slg}{g}

   \allowdisplaybreaks
\begin{document}
 \title{A fast iterative thresholding and support-and-scale shrinking algorithm (FITS$^3$) for non-Lipschitz group sparse optimization (I): the case of least-squares fidelity\thanks{Received date, and accepted date (The correct dates will be entered by the editor).}}


          \author{Yanan Zhao\thanks{School of Mathematical Sciences, Nankai University and Nanjing Research Institute of Electronic Engineering  (mathzyn@mail.nankai.edu.cn). }
  	\and
  Qiaoli Dong\thanks{College of Science, Civil Aviation University of China (dongql@lsec.cc.ac.cn).}
  \and
  Yufei Zhao\thanks{School of Mathematical Sciences, Nankai University and LPMC
  	(matzyf@nankai.edu.cn).}
  \and
  	Chunlin Wu\thanks{Corresponding author, School of Mathematical Sciences, Nankai University, LPMC and NITFID (wucl@nankai.edu.cn).}
}
         \pagestyle{myheadings} \markboth{FITS$^3$ for group sparse recovery}{Yanan Zhao, Qiaoli Dong, Yufei Zhao and Chunlin Wu} \maketitle

          \begin{abstract}
               We consider to design a new efficient and easy-to-implement algorithm to solve a general group sparse optimization model with a class of non-convex non-Lipschitz regularizations, named as fast iterative thresholding and support-and-scale shrinking algorithm  (FITS$^3$). In this paper we focus on the case of a least-squares fidelity. FITS$^3$ is designed from a lower bound theory of such models and by integrating thresholding operation, linearization and extrapolation techniques. The  FITS$^3$   has two advantages. Firstly, it is quite efficient and  especially suitable for large-scale problems, because it adopts support-and-scale shrinking and does not need to solve any linear or nonlinear system. For two important special cases, the FITS$^3$ contains only simple calculations like matrix-vector multiplication and soft thresholding. Secondly, the FITS$^3$ algorithm  has a sequence convergence guarantee under proper assumptions. The numerical experiments and comparisons to recent existing non-Lipschitz group recovery algorithms demonstrate that, the proposed FITS$^3$  achieves  similar  recovery accuracies, but costs only around a half of the CPU time by the second fastest compared algorithm for median- or large-scale problems. 
          \end{abstract}
\begin{keywords}  Group sparse  recovery; Non-convex non-Lipschitz optimization; Thresholding; Support-and-scale shrinking; Extrapolation;  Least-squares
\end{keywords}

 \begin{AMS} 	49M05, 65K10, 90C26, 90C30
\end{AMS}
          \section{Introduction}\label{intro}
In this paper, we consider the following non-convex and non-Lipschitz group sparse optimization model with a least-squares fidelity
\begin{equation}\label{eq:object-func_00}
	\min_{x\in \mathbb{R}^{n}}\left\{\calE(x) :=  \frac{1}{2}\|A x-b\|_{2}^{2}+\alpha\sum_{\slg_i \in \mathcal{G}}\psi(\|x_{\slg_i}\|_p)\right\},
\end{equation}
where $A\in \mathbb{R}^{m\times n} (m<n)$ is a nonzero measurement matrix, $b\in \mathbb{R}^{m}$ is the observation data, $\alpha>0$ is the regularization parameter, $\psi(t): [0, +\infty) \rightarrow [0, +\infty)$ is a regularization function, $p\in[1,+\infty)$, $\mathcal{G}$ is the group index set, $x=(x_{\slg_1}, x_{\slg_2},..., x_{\slg_r})^\top\in\mathbb{R}^{n}$ with $x_{\slg_i}\ (\slg_i\in \mathcal{G})$ as its $i$th group. 
If each $\slg_i$ has only one element, the group sparse reconstruction model \eqref{eq:object-func_00} degenerates to the conventional non-group reconstruction model.
The regularization function $\psi$ considered in this paper is assumed to be concave and non-Lipschitz around 0; see Assumption~\ref{ass_psi} for more details.
One example of $\psi$ is  $\psi(t)=t^q$ ($0<q<1$).
Then the regularization term becomes the widely used non-convex and non-Lipschitz $\ell_{p,q}$ regularization as follows
$$
\|x\|_{p,q}^q=\sum_{\slg_i \in \mathcal{G} }\|x_{\slg_i}\|_p^q, \qquad
\forall x\in \RR^n.
$$
With such a regularization term, \eqref{eq:object-func_00} becomes the $\ell_2$-$\ell_{p,q}$ minimization model
\begin{equation}\label{eq:object-func_lpq}
	\min_{x\in \mathbb{R}^{n}} \frac{1}{2}\|A x-b\|_{2}^{2}+\alpha\sum_{\slg_i \in \mathcal{G}}\|x_{\slg_i}\|_p^q.
\end{equation}

The minimization model \eqref{eq:object-func_lpq} originates from the group sparse signal reconstruction problem. This problem actually is to find an $x\in \mathbb{R}^{n}$ to approximate the unknown ground
truth $\underline{x}\in \mathbb{R}^{n}$ from the following measurement
\begin{equation}\label{eqn:linear}
	b = A \underline{x} + \varepsilon,
\end{equation}
where $\varepsilon\in \mathbb{R}^{m}$ is the measurement noise with $m<n$.
The above linear system is an underdetermined linear system, so it usually has an infinite number of solutions.
If the ground truth $\underline{x}$ is known to be sparse or groupwise sparse, the regularization term based on the sparse prior information is used to constrain the solution space. Researchers first considered the problem of sparse signal recovery and proposed
the  $\ell_0$ ``norm" regularization model~(\cite{natarajan1995sparse,ompalg,blumensath2008Iterative,Song2020zero}), $\ell_1$ norm regularization model~(\cite{compressedsensing,bp,Candes2006stable,lassomodel,Yin2008}), and some other non-convex regularization models like $\ell_q\ (0<q<1)$ ``norm" regularization model~(\cite{Candes2008Enhancing,Chartrand2008Iteratively,Chen2010Lower,Lai2013Improved,Xu2012L12,
	nikolova2005analysis,Zhao2022,Figueiredo2007Majorization}),   $\ell_{1-2}$ model \cite{Lou2015} and  $L_1-\beta L_q$ ($(\beta, q)\in[0, 1]\times[1,\infty) \backslash(1, 1)$) model \cite{Ng2023}. Later on, how to enhance the recoverability of the groupwise sparse signals or images has attracted attentions in many scenarios like color imaging~(\cite{Compressed_se}), DNA microarrays~(\cite{Empir,recover_2008}), dynamic MRI ~(\cite{group_sparse}), source localization~(\cite{parse_signa}) and joint sparse recovery~(\cite{Adcock2019,Baron2009,Cotter2005}). For group sparse recovery, researchers proposed the $\ell_{2,1}$  regularization model \cite{yuan2006model}, the $\ell_{p,q}(p\geq 1,0\leq q\leq 1)$  regularization model \cite{hu2017group,rakotomamonjy2011ell,hu2024} and non-convex Lipschitz regularization models \cite{Lou2021,Lou2024,SCAD2007,MCP2015}. \eqref{eq:object-func_lpq} with  $\ell_{2,1}$ regularization, which is also called the group lasso model, is the first model
that encodes group information in addition to sparsity. The group lasso and other $\ell_{2,1}$ regularization forms like equality-constrained  group sparse  recovery  were designed for grouped variable selection in statistics, and 
%
%
they are convex problems with various efficient algorithms (\cite{Adcock2019,deng2013group,qin2013efficient,wright2009sparse}). However, the solution obtained from the convex $\ell_1$ or $\ell_{2,1}$ regularization models is not very precise (with magnitude reduction) and less sparse than expected (\cite{Lai2013Improved,Candes2008Enhancing,Xu2012L12,rakotomamonjy2011ell,hu2017group}). Instead, as shown in the literatures, 
the non-convex models with  either Lipschitz or non-Lipschitz  regularizations, usually show better  sparse reconstruction performances. 

Among the general non-convex non-Lipschitz group reconstruction model, the $\ell_{p,q}$ model \eqref{eq:object-func_lpq} seems to be the most widely studied.
The theoretical analysis of~\eqref{eq:object-func_lpq}, such as recovery bound and lower bound theories, has been developed in the literatures~\cite{feng2020l2,hu2017group}.
The recovery bound theory is to  estimate the distance between the minimizers
of the regularization model and the ground truth $\underline{x}$.
For the $\ell_{p,q}$  regularization model~\eqref{eq:object-func_lpq} with $0<q\leq 1\leq p\leq 2$, a global recovery bound for any point in a level set of the objective function was established in \cite{hu2017group} under the group restricted eigenvalue condition on $A$; also, under some assumptions like the activeness condition, a local recovery bound for local minimizers was derived in \cite{hu2017group} for the case $0<q<1\leq p$.
In~\cite{feng2020l2}, both the global and local recovery bounds were provided using the
group restricted isometry property assumption on $A$ for the $\ell_{2,q}$ ($0<q<1$) regularization model. The lower bound theory means the existence of a positive lower bound of nonzero entries of the local minimizers of a model; see, e.g., \cite{nikolova2005analysis,Chen2010Lower,article11,feng2020l2}. For the $\ell_{p,q}$ model with $p\geq 1$ and $0<q<1$, \cite{feng2020l2} provided a uniform lower bound theory for nonzero groups of local minimizers, which is a generalization of the remarkable lower bound theory in~\cite{Chen2010Lower} for the conventional $\ell_q$ regularization model.


However, how to design an efficient algorithm and establish the convergence analysis for the non-convex and non-Lipschitz regularization model in \eqref{eq:object-func_00}, even that with $\ell_{p,q}$ in \eqref{eq:object-func_lpq}, is not a trivial task.
%
%
There are some existing algorithms for solving \eqref{eq:object-func_lpq} so far. 
The proximal gradient method for group sparse optimization (PGM-GSO) algorithm was proposed in \cite{hu2017group} with a full and interesting analysis  for the $\ell_2$-$\ell_{p,q}$ minimization model, which is a generalization of the classical proximal forward-backward splitting method \cite{Combettes2005}. At each iteration,  for $p=1,2$ and $q=0,1/2, 2/3,1$, the subproblem  has an analytical solution; otherwise, a numerical algorithm such as the Newton method is used to calculate an approximate solution of the subproblem.
The global convergence of PGM-GSO was established, and its local linear convergence rate  was  derived under certain assumptions for the case $p=1$ and $0<q<1$.
The inexact iterative support shrinking algorithm with
proximal linearization for group sparse optimization (InISSAPL-GSO) was proposed in \cite{xue2019efficient} 
for solving the general $\ell_r$-$\ell_{p,q}$ minimization problem where $1\leq r\leq +\infty,\  p\geq 1$ and $0<q<1$.
It was designed by developing a group support inclusion property, like some previous works. InISSAPL-GSO has a two-loop structure with a scaled alternating direction method of multipliers (ADMM) as the inner solver. In \cite{feng2020l2}, inspired by the obtained lower bound theory,
the iterative reweighted least squares with thresholding algorithm (IRLS-th), which utilized a group support shrinking strategy similar to \cite{xue2019efficient}, was designed for the $\ell_2$-$\ell_{2,q}$ minimization problem. This IRLS-th algorithm was proposed on the basis of the IRLS algorithm (\cite{Chartrand2008Iteratively,Daubechies2010Iteratively,Lai2011Unconstrained,Lai2013Improved}), and  has an improved global convergence guarantee. We will compare these algorithms in Section \ref{sect_num}.
%

In this paper,  we present a simple and efficient algorithm with convergence guarantee to solve ~\eqref{eq:object-func_00}.
We first give a lower bound theory for the minimization model \eqref{eq:object-func_00}, which is a straightforward generalization of the conclusion in~\cite{feng2020l2}.
This naturally motivates a group support-and-scale shrinkage strategy 
in the iterative algorithm design.
Combining this strategy with a linearization method and an  extrapolation technique, we construct an iterative algorithm named as the fast iterative thresholding with support-and-scale shrinking algorithm (FITS$^3$). 
FITS$^3$ is a unified iterative algorithm for solving model~\eqref{eq:object-func_00} with $p\geq 1$, and in particular, it can be applied to solve the $\ell_{p,q}$ regularization model~\eqref{eq:object-func_lpq}.
The subproblem in each iteration is strongly convex, and the analytical solution of the subproblem is derived when $p=1$ and $p=2$.
Furthermore, we prove that the sequence generated by FITS$^3$ with an inexact inner solver converges globally to a stationary point of the objective function in \eqref{eq:object-func_00}. The convergence rate of FITS$^3$  can also be shown using a similar analysis as in the literatures. The numerical experiments show that, with comparable reconstruction accuracy, the FITS$^3$ algorithm has a definite advantage in terms of running time over other algorithms, especially for median- or large-scale problems.

The remainder of this paper is organized as follows. In Section \ref{sect2_nota}, we give some notations and preliminaries. In Section \ref{sec:algo}, we propose our new algorithm named as FITS$^3$. The convergence analysis of FITS$^3$ is established in Section \ref{sec:con-ana}. The numerical experiments and comparisons are presented in Section \ref{sect_num}.

\section{Notation and preliminary}\label{sect2_nota}

Suppose that $x=(x_1, x_2,...,x_n)^\top$ is an $n$-dimensional vector.
We use $[n]=\set{1,2,...,n}$ to denote the index set of $x$ and $x_j$ is the $j$th element of $x$. Let $\mathcal{G}=\set{\slg_1,...,\slg_i,...,\slg_r}$ denote a collection of index sets of groups with $\slg_i=\set{n_{i-1}+1,...,n_{i-1}+n_i}$, $n_0=0$, $n_i\in[n]$ ($1\leq i\leq r$), and $\sum_{i=1}^r n_i=n$.
With this,
$x=(x_{\slg_1}^\top, x_{\slg_2}^\top,..., x_{\slg_r}^\top)^\top$ represents a group structure of $x$ as follows 
$$
x=(\underbrace{x_1,...,x_{n_1}}_{x_{\slg_1}^\top},...,\underbrace{x_{n_{i-1}+1},...,x_{n_{i-1}+n_i}}_{x_{\slg_i}^\top},...,\underbrace{x_{n-n_r+1},...,x_n}_{x_{\slg_r}^\top})^{\top},
$$
where $x_{\slg_i}$ is the $i$th group of $x$.
Let $\#\slg_i$ represent the cardinality of $\slg_i$. We have $\sum_{i=1}^{r}(\#\slg_i)=n$.
Note that we only consider the case with no overlapping between groups.
For a group $x_{\slg_i}$, $x_{\slg_i}=0$ means that $x_j=0, \forall j\in \slg_i$. 
See, e.g., \cite{feng2020l2} for details.
For a matrix $A\in \mathbb{R}^{m\times n}$, we also partition it into submatrices associating to the group structure of $x$, i.e.,
$$
A=\left[
\begin{array}{cccc}
	A_{1,\slg_1} & A_{1,\slg_2} & ... & A_{1,\slg_r} \\
	... & ... & ... & ... \\
	A_{m,\slg_1} & A_{m,\slg_2} & ... & A_{m,\slg_r} \\
\end{array}
\right].
$$
Let $\mathcal{S}$ be a subset of $\mathcal{G}$, then $x_\mathcal{S}$ denotes the sub-vector of $x$ consisting of the elements determined by indices collected in $\mathcal{S}$. Similarly, $A_\mathcal{S}$ denotes the sub-matrix of $A$ consisting of the columns determined by indices collected in $\mathcal{S}$.
We also recall the following notations for support sets:
$$
\mathcal{S}(x)=\left\{ \slg_i \in \mathcal{G}: \|x_{\slg_i}\|_p \neq 0 \right\},\qquad
s(x_{\slg_i})=\left\{j\in \slg_i: x_j\neq 0 \right\},
$$
where 	$\mathcal{S}(x)$ is called the group support set of $x$ and $s(x_{\slg_i})$ is called the support set of $x_{\slg_i}$ (see, e.g., \cite{feng2020l2,xue2019efficient}).

We assume the regularization function $\psi$ satisfying the following
\begin{Assumption}\label{ass_psi}
	\begin{enumerate}[(i)]
		\item The function $\psi: [0, +\infty) \rightarrow [0, +\infty)$ is a concave coercive~(\cite[Definition 3.25]{rockafellar2009variational}) function, and  is $C^2$ on $(0, +\infty)$ with $\psi(0)=0$.
		\item  $\psi^{\prime}(0+)=+\infty$.
		\item $\psi^{\prime \prime}$ is an increasing function on $(0, +\infty)$.
	\end{enumerate}	
\end{Assumption}
Assumption \ref{ass_psi}$(i)$ implies several consequences. Firstly, $\psi^{\prime}(t)|_{(0, +\infty)}> 0$.  Secondly, the objective function $\calE$ in \eqref{eq:object-func_00} is coercive, and thus  has always a minimizer. Thirdly, $0$ is the strict minimizer of  $\psi$, and thus \eqref{eq:object-func_00} has a trivial solution $0$ if $b=0$. Therefore, we consider the case of $b\neq0$ in this paper. Assumption \ref{ass_psi}$(ii)$ implies that $\psi$ is  non-Lipschitz.  Herein,  we list some functions that satisfy Assumption \ref{ass_psi}: $\psi_1(t)=t^q, 0<q<1; \psi_2(t)=\log(t^q+1), 0<q<1$ (\cite{nikolova2005analysis,zeng2019iterative}). 
\begin{Remark}\label{property_psi}
	We list some straightforward properties (see, e.g., \cite{xue2019efficient}) of $\psi$.
	\begin{enumerate}[(i)]
		\item For any $c > 0$, $\psi^{\prime}(t)$ is $ L_{c}$-Lipschitz continuous on $[c,+\infty)$, i.e., there exists  constant $ L_{c} > 0$ determined by $c$, such that $
		\abs{\psi^{\prime}(t) - \psi^{\prime}(s)} \leq L_{c} \abs{t-s},
		$ for any $t,s \in [c, +\infty)$.
		
		\item For any $w\in\mathbb{R}^{d}$ and $p\geq 1$, the subdifferential of $\psi(\|w\|_p)$ is
		\begin{equation}\label{eq_subdiff}
			\partial \psi(\|w\|_p)=\left\{
			\begin{aligned}
				&\prod_{j=1}^{d}(-\infty,+\infty), \; \;\text{if}\;\; \|w\|_p=0;\\
				&\prod_{j=1}^{d} \mu_{j},\qquad\quad\;\;\; \;\text{if}\;\;\|w\|_p\neq 0,
			\end{aligned}
			\right.
		\end{equation}
		where $\prod$ means the Cartesian product of sets, and
		\begin{equation*}
			\mu_{j}=\left\{
			\begin{aligned}
				&\set{\psi^{\prime}(\|w\|_p)\|w\|_p^{1-p}|w_j|^{p-1}\sgn(w_j)}, \quad p>1;\\
				&\set{\psi^{\prime}(\|w\|_1) \sgn(w_j)},\qquad\quad\qquad\; \qquad p=1, w_j \neq 0;\\
				&[-\psi^{\prime}(\|w\|_1), \; \psi^{\prime}(\|w\|_1)],\;\, \;\qquad \quad\;\;\qquad p=1, w_j=0;
			\end{aligned}
			\right.
		\end{equation*}
		with the sign function  $\sgn(\cdot)$.
	\end{enumerate}
\end{Remark}

Clearly, the subdifferential of $\calE$ in \eqref{eq:object-func_00} is
$
\partial \calE(x) 
= A^{\top}(Ax-b)
+\alpha \prod_{\slg_i \in \mathcal{G}}\partial\psi(\|x_{\slg_i}\|_p),
$
where $\partial\psi(\|x_{\slg_i}\|_2)$ is calculated by \eqref{eq_subdiff}.


Next, we give two inequalities that will be frequently used in the convergence analysis.
\begin{lemma}(\cite{hu2017group})\label{lemma_inequa}
	For any $w\in\mathbb{R}^{d}$, $0<\gamma_1\leq \gamma_2$, we have the following inequality
	\begin{equation}\label{eq_ineq1}
		\|w\|_{\gamma_2}\leq \|w\|_{\gamma_1}.
	\end{equation}
\end{lemma}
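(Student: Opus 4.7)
The plan is to prove this by reducing to the unit-norm case and then exploiting pointwise monotonicity of the map $t \mapsto t^s$ on $[0,1]$. The statement is the classical monotonicity of $\ell_p$-quasinorms in $p$; my proof will be the standard one, which requires no deep tool.

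First I would dispose of the trivial case $w = 0$ (both sides are zero). For $w \neq 0$, I would normalize: set $v = w / \|w\|_{\gamma_1} \in \mathbb{R}^d$, so that $\sum_{j=1}^d |v_j|^{\gamma_1} = 1$, and in particular $|v_j| \leq 1$ for every index $j$. Since $\gamma_2 \geq \gamma_1 > 0$, for each $j$ we have $|v_j|^{\gamma_2} \leq |v_j|^{\gamma_1}$ (when $|v_j| = 0$ both sides vanish; otherwise this is the monotonicity of $s \mapsto a^s$ on $s > 0$ for a fixed $a \in (0,1]$, which is nonincreasing). Summing over $j$ yields
\begin{equation*}
	\sum_{j=1}^d |v_j|^{\gamma_2} \;\leq\; \sum_{j=1}^d |v_j|^{\gamma_1} \;=\; 1,
\end{equation*}
hence $\|v\|_{\gamma_2} = \bigl(\sum_j |v_j|^{\gamma_2}\bigr)^{1/\gamma_2} \leq 1 = \|v\|_{\gamma_1}$.

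Finally I would multiply through by $\|w\|_{\gamma_1}$, using the positive homogeneity of degree one of $\|\cdot\|_\gamma$ (which holds for all $\gamma > 0$, not only for $\gamma \geq 1$, since $\|cw\|_\gamma = |c|\,\|w\|_\gamma$ follows directly from the definition). This gives $\|w\|_{\gamma_2} \leq \|w\|_{\gamma_1}$, as claimed.

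There is no real obstacle in this argument; the only subtlety worth flagging is that $\|\cdot\|_\gamma$ is not a norm when $\gamma < 1$, but the two properties actually used—positive homogeneity and the elementwise formula $\|w\|_\gamma = (\sum_j |w_j|^\gamma)^{1/\gamma}$—still make sense and still hold, so the proof goes through uniformly for all $0 < \gamma_1 \leq \gamma_2$.
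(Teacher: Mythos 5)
Your proof is correct and complete: the normalization to $\sum_j |v_j|^{\gamma_1}=1$, the pointwise inequality $|v_j|^{\gamma_2}\leq|v_j|^{\gamma_1}$ for $|v_j|\leq 1$, and the degree-one homogeneity of $\|\cdot\|_\gamma$ for all $\gamma>0$ are exactly the ingredients of the standard argument. The paper itself gives no proof of this lemma (it only cites \cite{hu2017group}), so your self-contained derivation is, if anything, more than the paper provides; nothing is missing.
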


\begin{lemma}(\cite{xue2019efficient})\label{lemma_ineq2}
	For any $w\in\mathbb{R}^{d}$, $\gamma>0$, then there exists $\widetilde{C}_{\gamma}>0$ such that
	\begin{equation}\label{eq_ineq3}
		\|w\|_{\gamma}\leq \widetilde{C}_{\gamma} \|w\|_{\gamma+1},
	\end{equation}
	particularly, for $\gamma=1$ it holds that $\|w\|_{1}\leq \sqrt{d} \|w\|_{2}$.
\end{lemma}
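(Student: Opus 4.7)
The plan is to derive this as an elementary application of H\"older's inequality for finite sums, together with the identification $\|w\|_\gamma^\gamma = \sum_{j=1}^d |w_j|^\gamma$. I would treat the inequality as a norm-equivalence estimate in $\mathbb{R}^d$: the exponent $\gamma$ on the left is smaller than $\gamma+1$ on the right, so one expects a dimension-dependent constant, and the only question is what its precise value must be.

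The key step is to write
\[
\sum_{j=1}^d |w_j|^\gamma \;=\; \sum_{j=1}^d |w_j|^\gamma \cdot 1
\]
and apply H\"older's inequality with the conjugate pair $p = (\gamma+1)/\gamma$ and $q = \gamma+1$, which satisfy $1/p + 1/q = 1$ and are both strictly greater than $1$ since $\gamma > 0$. The factor $|w_j|^\gamma$ raised to the power $p$ yields $|w_j|^{\gamma+1}$, while $1^q$ summed over the $d$ indices yields $d$. This produces
\[
\sum_{j=1}^d |w_j|^\gamma \;\leq\; \left(\sum_{j=1}^d |w_j|^{\gamma+1}\right)^{\gamma/(\gamma+1)} d^{1/(\gamma+1)} \;=\; \|w\|_{\gamma+1}^{\,\gamma} \cdot d^{1/(\gamma+1)}.
\]
Raising both sides to the power $1/\gamma$ then gives the claimed inequality with the explicit constant $\widetilde{C}_\gamma := d^{1/(\gamma(\gamma+1))}$.

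As a consistency check, at $\gamma = 1$ this recovers $\widetilde{C}_1 = d^{1/2} = \sqrt{d}$, matching the explicit bound $\|w\|_1 \leq \sqrt{d}\,\|w\|_2$ stated in the lemma. There is no real obstacle in the argument: the only care required is verifying that both H\"older exponents lie in $[1,\infty]$, which is automatic from $\gamma > 0$, and noting that for $\gamma \in (0,1)$ the quantity $\|w\|_\gamma$ is only a quasi-norm but the sum-based manipulation and the final extraction of the $1/\gamma$-th root go through unchanged.
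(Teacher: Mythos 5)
Your proof is correct and complete: the H\"older step with conjugate exponents $(\gamma+1)/\gamma$ and $\gamma+1$ is valid for all $\gamma>0$, and the resulting constant $\widetilde{C}_\gamma = d^{1/(\gamma(\gamma+1))}$ correctly specializes to $\sqrt{d}$ at $\gamma=1$. The paper itself gives no proof of this lemma, only a citation to \cite{xue2019efficient}; your argument is the standard derivation of this norm-comparison inequality and there is nothing to add.
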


\section{Algorithm}
\label{sec:algo}
In this section, we start from a theorem on lower bound theory, and combine thresholding, linearization method and the  extrapolation technique, to develop a new efficient and easy-to-implement algorithm, i.e., the fast iterative thresholding with support-and-scale shrinking algorithm (FITS$^3$) for the non-convex non-Lipschitz minimization problem in \eqref{eq:object-func_00}.

\begin{theorem}\label{theorem_lowerbound}
	(Lower bound theory) If $\calE$ in \eqref{eq:object-func_00} has a local minimum at $x^{*}$, then for any $\slg_i \in \mathcal{G}$,
	$$
	x^{*}_{\slg_i} \neq 0\; \Longrightarrow \;\|x^{*}_{\slg_i}\|_p\geq\kappa\text{ with }
	\kappa=\left\{
	\begin{aligned}
		& (\psi^{\prime\prime})^{-1}\left(\frac{-\|A\|_2^2}{\alpha}\right), \;\quad\;\;\; 1\leq p\leq 2,\\
		& (\psi^{\prime\prime})^{-1}\left(\frac{-\|A\|_2^2}{\alpha (\#\slg_i)^{1-2/p}}\right), \; p>2.
	\end{aligned}
	\right.
	$$
\end{theorem}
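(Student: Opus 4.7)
The plan is to exploit the second-order optimality condition along a one-parameter scaling of the nonzero group. Fix $\slg_i \in \mathcal{G}$ with $x^{*}_{\slg_i} \neq 0$ and decompose $x^{*} = v + w$, where $w$ agrees with $x^{*}$ on the indices in $\slg_i$ and vanishes elsewhere. Set $s := \|x^{*}_{\slg_i}\|_p = \|w\|_p > 0$. I would then define
\begin{equation*}
\phi(t) := \calE(v + tw) = \tfrac{1}{2}\|Av + tAw - b\|_2^2 + \alpha \psi(ts) + C, \qquad t > 0,
\end{equation*}
where $C$ collects the $\psi$-contributions from the groups other than $i$ and is independent of $t$. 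Because $s > 0$ and $\psi$ is $C^2$ on $(0, +\infty)$, $\phi$ is $C^2$ in a neighborhood of $t = 1$; since $x^{*}$ is a local minimizer of $\calE$, $t = 1$ is a local minimizer of $\phi$, and hence $\phi''(1) \geq 0$.

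A direct computation yields $\phi''(1) = \|Aw\|_2^2 + \alpha s^2 \psi''(s) \geq 0$, i.e.\ $\psi''(s) \geq -\|Aw\|_2^2/(\alpha s^2)$. Next I would bound $\|Aw\|_2^2 \leq \|A\|_2^2 \|w\|_2^2$ and convert $\|w\|_2$ into $\|w\|_p = s$ via the standard $\ell^p$-norm comparisons, which split exactly along $p = 2$. When $1 \leq p \leq 2$, Lemma \ref{lemma_inequa} gives $\|w\|_2 \leq \|w\|_p = s$; when $p > 2$, the reverse inequality applied to a vector with support of size $\#\slg_i$ gives $\|w\|_2 \leq (\#\slg_i)^{1/2 - 1/p} s$. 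Plugging these in produces
\begin{equation*}
\psi''(s) \geq -\frac{\|A\|_2^2}{\alpha} \quad (1 \leq p \leq 2), \qquad \psi''(s) \geq -\frac{\|A\|_2^2}{\alpha (\#\slg_i)^{1-2/p}} \quad (p > 2).
\end{equation*}

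To finish, I would appeal to Assumption \ref{ass_psi}(iii): $\psi''$ is strictly increasing on $(0, +\infty)$, so it admits an increasing inverse on its range. Applying $(\psi'')^{-1}$ to both sides of the inequality relevant to the current case then gives the claimed lower bound $s \geq \kappa$.

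The main obstacle I anticipate is producing the clean form of $\phi''(1)$ in the first place: the key reason the computation collapses is that perturbing by a single scalar rescaling makes $t \mapsto \|tx^{*}_{\slg_i}\|_p = ts$ linear for $t > 0$, so $\psi \circ \|\cdot\|_p$ contributes exactly $\alpha s^2 \psi''(s)$ and no stray Hessian terms from the curvature of $\|\cdot\|_p$. A generic directional perturbation would not simplify this way, and the assumption $p \geq 1$ is used here through the homogeneity of $\|\cdot\|_p$. A small auxiliary point is checking that the right-hand sides $-\|A\|_2^2/\alpha$ and $-\|A\|_2^2/(\alpha(\#\slg_i)^{1-2/p})$ lie in the range of $\psi''$; this follows from $\psi'(0+) = +\infty$ together with $\psi'' \leq 0$, which forces $\psi''(t) \to -\infty$ as $t \to 0^+$ under Assumption \ref{ass_psi}.
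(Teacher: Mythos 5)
Your overall strategy --- restricting $\calE$ to the scalar rescaling $t\mapsto v+tw$ of the nonzero group, using that positive homogeneity of $\|\cdot\|_p$ makes the penalty term exactly $\alpha\psi(ts)$, invoking $\phi''(1)\ge 0$, and then converting $\|w\|_2$ to $\|w\|_p$ --- is the standard second-order argument behind this kind of lower bound, and it is the same route as the proof the paper delegates to \cite[Theorem 3.1]{feng2020l2}. The computation $\phi''(1)=\|Aw\|_2^2+\alpha s^2\psi''(s)\ge 0$ is correct, the case $1\le p\le 2$ goes through cleanly via $\|w\|_2\le\|w\|_p$, and your closing remarks about the range of $\psi''$ are sound (indeed $\psi''(t)\to-\infty$ as $t\to 0^+$ follows from $\psi'(0+)=+\infty$ together with the monotonicity of $\psi''$, and $\psi''(t)\to 0^-$ at infinity since otherwise $\psi$ could not stay nonnegative).

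There is, however, a genuine non sequitur in the $p>2$ branch. From $\psi''(s)\ge -\|A\|_2^2\|w\|_2^2/(\alpha s^2)$ and your (correct) estimate $\|w\|_2\le(\#\slg_i)^{1/2-1/p}s$, substitution gives
\begin{equation*}
\psi''(s)\;\ge\;-\frac{\|A\|_2^2\,(\#\slg_i)^{1-2/p}}{\alpha},
\end{equation*}
i.e.\ the group-size factor lands in the \emph{numerator}. Your displayed conclusion instead puts it in the denominator, $\psi''(s)\ge-\|A\|_2^2/\bigl(\alpha(\#\slg_i)^{1-2/p}\bigr)$, which is a strictly stronger statement since $(\#\slg_i)^{1-2/p}\ge 1$ for $p>2$ and $(\psi'')^{-1}$ is increasing. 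To reach the denominator version along your lines you would need $\|w\|_2\le(\#\slg_i)^{1/p-1/2}\|w\|_p$, which is false in general (take $w$ with equal entries). So as written your argument proves the theorem only with the weaker constant $(\psi'')^{-1}\bigl(-\|A\|_2^2(\#\slg_i)^{1-2/p}/\alpha\bigr)$ in the case $p>2$; the jump to the displayed inequality that matches the theorem's $\kappa$ is unjustified. Either the placement of the factor in the theorem's formula should be read as being in the numerator --- in which case your proof is complete once you fix that line --- or a genuinely different estimate is required, and your write-up does not supply one.
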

\begin{proof}
	This proof is a straightforward generalization of \cite[Theorem 3.1]{feng2020l2}.
\end{proof}

Theorem \ref{theorem_lowerbound} implies a group support inclusion relationship.
Suppose that a local minimizer $x^\ast$ is very near to some iteration point $x^k$ such that $\|x^\ast-x^k\|_p<\kappa$. Then for $\slg_i\in\mathcal{G}$ with $x^{k}_{\slg_i}= 0$, $\|x^\ast_{\slg_i}\|_p\leq \|x^\ast_{\slg_i}-x^k_{\slg_i}\|_p+\|x^{k}_{\slg_i} \|_p<\kappa$, which implies $x^\ast_{\slg_i}=0$ by Theorem~\ref{theorem_lowerbound}.
Therefore, $\mathcal{S}(x^\ast)\subseteq\mathcal{S}(x^k):=\mathcal{S}^{k}$. Such
support inclusion relationship naturally motivates a group support shrinking strategy at each iteration, like \cite{Zhao2022,xue2019efficient,feng2020l2,article11} and references therein.
In addition, considering the finite word length of real computers and to avoid extremely large linearization weights described later,
we relax the computation of the group support set $\mathcal{S}^k$ by using a thresholding operation like \cite{feng2020l2,article11,Wen2010}. In particular, let $\tilde{x}^{k}$ be as follows
$$
\tilde{x}^{k}_{\slg_i}=\left\{
\begin{aligned}
	&x^{k}_{\slg_i},\;\;\; \text{if}\;\,\|x^{k}_{\slg_i}\|_p\geq \tau;\\
	&0,\quad\;\; \text{if}\;\,\|x^{k}_{\slg_i}\|_p<\tau,
\end{aligned}
\right.
$$
for some thresholding parameter $\tau$, and denote $\tilde{\mathcal{S}}^{k}=\mathcal{S}(\tilde{x}^k)$. Obviously, it holds that $\tilde{\mathcal{S}}^{k}\subseteq \mathcal{S}^{k}$ (\cite{article11}).
Consequently, given $x^k$, we can compute $x^{k+1}$ by 
\begin{equation}\label{eq_minxin}
	\begin{aligned}
		&\min_{x\in \mathbb{R}^{n}} \set{\frac{1}{2}\|A x-b\|_{2}^{2}+\alpha\sum_{\slg_i \in \tilde{\mathcal{S}}^{k}}\psi(\|x_{\slg_i}\|_p)}\\
		&\text{s.t.}\;x_{\slg_i}=0, \; \forall \slg_i \in \mathcal{G}\setminus \tilde{\mathcal{S}}^{k}.
	\end{aligned}
\end{equation}

Note that the problem \eqref{eq_minxin} is still non-convex and  cannot be solved directly. Here, we linearize both the fidelity term and the regularization term \cite{Figueiredo2007Majorization,Zhao2022} at $\tilde{x}^{k}$, and $x^{k+1}$ can be computed by
\begin{equation}\label{eq_minxinr}
	\begin{aligned}
		&\min_{x\in \mathbb{R}^{n}} \set{
			\begin{aligned}
				\frac{1}{2}\|A\tilde{x}^{k}-b\|_{2}^{2}+&(x-\tilde{x}^{k})^\top A^\top(A\tilde{x}^{k}-b) +\frac{\beta}{2}\|x-\tilde{x}^{k}\|^{2}_{2}+\\
				&\alpha\sum_{\slg_i \in \tilde{\mathcal{S}}^{k}}\left[\psi(\|\tilde{x}_{\slg_i}^{k}\|_p)
				+\psi^{\prime}(\|\tilde{x}_{\slg_i}^{k}\|_p)(\|x_{\slg_i}\|_p-\|\tilde{x}^{k}_{\slg_i}\|_p)\right]
			\end{aligned}
		}\\
		&\text{s.t.}\;x_{\slg_i}=0, \; \forall \slg_i \in \mathcal{G}\setminus \tilde{\mathcal{S}}^{k},
	\end{aligned}
\end{equation}
where $\beta>\|A^{\top}A\|_2$. For \eqref{eq_minxinr}, we can simply substitute the constraint into the objective function, which not only eliminates constraints but also reduces the scale of the problem. Let $B^k=A_{\tilde{\mathcal{S}}^{k}}$. Clearly, one has $A\tilde{x}^{k}=B^k\tilde{x}^{k}_{\tilde{\mathcal{S}}^{k}}$.
When the group support set $\mathcal{S}(x)\subseteq \tilde{\mathcal{S}}^{k}$, we  derive that
\begin{equation*}
	\begin{split}
		&\frac{1}{2}\|A\tilde{x}^{k}-b\|_{2}^{2}+(x-\tilde{x}^{k})^\top A^\top(A\tilde{x}^{k}-b) +\frac{\beta}{2}\|x-\tilde{x}^{k}\|^{2}_{2}\\
		=&\frac{1}{2}\|B^k\tilde{x}^{k}_{\tilde{\mathcal{S}}^{k}}-b\|_{2}^{2}+ (x_{\tilde{\mathcal{S}}^{k}}-\tilde{x}^{k}_{\tilde{\mathcal{S}}^{k}})^\top (B^k)^\top (B^k\tilde{x}^{k}_{\tilde{\mathcal{S}}^{k}}-b) +\frac{\beta}{2}\|x_{\tilde{\mathcal{S}}^{k}}-\tilde{x}^{k}_{\tilde{\mathcal{S}}^{k}}\|^{2}_{2}.
	\end{split}
\end{equation*}
Therefore, the problem in \eqref{eq_minxinr} is equivalent to the following unconstrained minimization problem
\begin{equation}\label{eq_alix}
	\left\{
	\begin{aligned}
		&x^{k+1}_{\tilde{\mathcal{S}}^{k}}=\arg\min_{x_{\tilde{\mathcal{S}}^{k}}} \set{
			\begin{aligned}
				\frac{1}{2}\|B^k\tilde{x}^{k}_{\tilde{\mathcal{S}}^{k}}-b\|_{2}^{2}+& (x_{\tilde{\mathcal{S}}^{k}}-\tilde{x}^{k}_{\tilde{\mathcal{S}}^{k}})^\top (B^k)^\top (B^k\tilde{x}^{k}_{\tilde{\mathcal{S}}^{k}}-b) +\frac{\beta}{2}\|x_{\tilde{\mathcal{S}}^{k}}-\tilde{x}^{k}_{\tilde{\mathcal{S}}^{k}}\|^{2}_{2}+\\
				&\alpha\sum_{\slg_i \in \tilde{\mathcal{S}}^{k}}\left[\psi(\|\tilde{x}_{\slg_i}^{k}\|_p)
				+\psi^{\prime}(\|\tilde{x}_{\slg_i}^{k}\|_p)(\|x_{\slg_i}\|_p-\|\tilde{x}^{k}_{\slg_i}\|_p)\right]
			\end{aligned}
		}\\
		&x^{k+1}_{\mathcal{G}\setminus \tilde{\mathcal{S}}^{k}}=0.
	\end{aligned}
	\right.
\end{equation}

The problem  \eqref{eq_alix} can be solved efficiently. To further speed up the computation, we borrow the idea of the  extrapolation technique  (\cite{Nesterov1983A,Beck2009A,Zhao2022,Ochs2014iPiano}),
and introduce a new variable $z^k$ defined as:
\begin{equation}\label{eq_zkk}
	\left\{
	\begin{aligned} &z^{k}_{\tilde{\mathcal{S}}^{k}}=\tilde{x}^{k}_{\tilde{\mathcal{S}}^{k}}+t_{k}(\tilde{x}^{k}_{\tilde{\mathcal{S}}^{k}}-\tilde{x}^{k-1}_{\tilde{\mathcal{S}}^{k}});\\
		&z^{k}_{\mathcal{G}\setminus\tilde{\mathcal{S}}^{k}}=0,
	\end{aligned}
	\right.
\end{equation}
where $t_k$ is the parameter of extrapolation. Note that $\mathcal{S}(z^k)\subseteq\tilde{\mathcal{S}}^{k}$.
We then propose to update $x^{k+1}$ as follows
\begin{equation}\label{eq_aliz}
	\left\{
	\begin{aligned}
		&x^{k+1}_{\tilde{\mathcal{S}}^{k}}=\arg\min_{x_{\tilde{\mathcal{S}}^{k}}} \set{
			\begin{aligned}
				\frac{1}{2}\|B^kz^{k}_{\tilde{\mathcal{S}}^{k}}-b\|_{2}^{2}+& (x_{\tilde{\mathcal{S}}^{k}}-z^{k}_{\tilde{\mathcal{S}}^{k}})^\top (B^k)^\top (B^kz^{k}_{\tilde{\mathcal{S}}^{k}}-b) +\frac{\beta}{2}\|x_{\tilde{\mathcal{S}}^{k}}-z^{k}_{\tilde{\mathcal{S}}^{k}}\|^{2}_{2}+\\
				&\alpha\sum_{\slg_i \in \tilde{\mathcal{S}}^{k}}\left[\psi(\|\tilde{x}_{\slg_i}^{k}\|_p)
				+\psi^{\prime}(\|\tilde{x}_{\slg_i}^{k}\|_p)(\|x_{\slg_i}\|_p-\|\tilde{x}^{k}_{\slg_i}\|_p)\right]
			\end{aligned}
		}\\
		&x^{k+1}_{\mathcal{G}\setminus \tilde{\mathcal{S}}^{k}}=0.
	\end{aligned}
	\right.
\end{equation}

We now  define
$$
y^{k}=z^{k}_{\tilde{\mathcal{S}}^{k}}-\frac{1}{\beta} (B^k)^\top (B^kz^{k}_{\tilde{\mathcal{S}}^{k}}-b),
$$
and reformulate \eqref{eq_aliz} as follows
\begin{equation}\label{eq_leftali}
	x^{k+1}_{\slg_i}=\left\{
	\begin{aligned}
		&\text{arg}\min_{x_{\slg_i}}\left\{
		\frac{\beta}{2}\|x_{\slg_i}-y^{k}_{\slg_i}\|_2^{2}+\alpha\psi^{\prime}(\|\tilde{x}_{\slg_i}^{k}\|_p)\|x_{\slg_i}\|_p\right\}, \;\; \forall \slg_i  \in \tilde{\mathcal{S}}^{k} \\
		&0,  \;\; \forall \slg_i \in \mathcal{G} \setminus \tilde{\mathcal{S}}^{k},
	\end{aligned}
	\right.
\end{equation}
where $y^{k}_{\slg_i}$ is the component of $y^{k}$. The problem in \eqref{eq_leftali} is a strongly convex problem and there are many efficient methods to solve it, such as the Newton method.
For two most interesting cases with $p=1$ and $p=2$, this problem has analytic solutions. When $p=1$, its analytic solution is as follows~(\cite{denose_bysoft})
$$
x^{k+1}_{j}=\max\left\{ |y_{j}^{k}|-\alpha\psi^{\prime}(\|\tilde{x}_{\slg_i}^{k}\|_1)/\beta, 0\right\}\sgn(y_{j}^{k}), \; \forall j\in \slg_i,\, \slg_i  \in \tilde{\mathcal{S}}^{k}.
$$
When $p=2$, the analytic solution is the multi-dimensional shrinkage~(\cite{article_yinwotao})
$$
x^{k+1}_{\slg_i}=\max\left\{ \|y_{\slg_i}^{k}\|_2-\alpha\psi^{\prime}(\|\tilde{x}_{\slg_i}^{k}\|_2)/\beta, 0\right\}\frac{y_{\slg_i}^{k}}{\|y_{\slg_i}^{k}\|_2}, \; \forall \slg_i  \in \tilde{\mathcal{S}}^{k}.
$$
We mention that with $p=1$, the minimization problem~\eqref{eq:object-func_00} is suitable for intra-group sparse recovery (\cite{xue2019efficient,hu2017group}). When $p> 1$,  $p=2$ is the most typical choice in~\eqref{eq:object-func_00} (\cite{hu2017group,feng2020l2}).

In summary, we give the details of the  fast iterative thresholding with support and scale shrinking algorithm (FITS$^3$) in Algorithm \ref{alg_accelerated_ITS3}. Therein  $\bar t\in(0,1]$ is chosen so that  Algorithm \ref{alg_accelerated_ITS3} is convergent; see Section \ref{sec:con-ana} for details.
\begin{algorithm}[htbp]
	\caption{FITS$^3$: fast iterative thresholding with support-and-scale shrinking}
	\label{alg_accelerated_ITS3}
	\begin{algorithmic}
		\STATE{Input $\alpha, \beta,\mathrm{Tol},\tau, x^{0},x^{-1}=\tilde{x}^{-1}=x^{0}, k=0,\text{MAXit}$ and $\set{t_{k}}\subset[0,{\bar t})$ satisfying $\sup_kt_k<{\bar t}$.}
		\WHILE{$k\leq \text{MAXit}$ \; }
		\STATE{1. Thresholding:
			\begin{equation}\label{eq:threshold1}
				\nonumber
				\tilde{x}^{k}_{\slg_i}=\left\{
				\begin{aligned}
					&x^{k}_{\slg_i},\;\;\; \text{if}\;\,\|x^{k}_{\slg_i}\|_p\geq \tau;\\
					&0,\quad\;\; \text{if}\;\,\|x^{k}_{\slg_i}\|_p<\tau.
				\end{aligned}
				\right.
			\end{equation}
		}
		\STATE{2. Find $\tilde{\mathcal{S}}^{k}=\left\{ \slg_i \in \mathcal{G} : \|\tilde{x}^k_{\slg_i}\|_p \neq 0 \right\}$.}
		\STATE{3. Update $B^k$ and $z^{k}$:	
			\begin{equation}
				\nonumber
				B^k=A_{\tilde{\mathcal{S}}^{k}}, \qquad
				\left\{
				\begin{aligned} &z^{k}_{\tilde{\mathcal{S}}^{k}}=\tilde{x}^{k}_{\tilde{\mathcal{S}}^{k}}+t_{k}(\tilde{x}^{k}_{\tilde{\mathcal{S}}^{k}}-\tilde{x}^{k-1}_{\tilde{\mathcal{S}}^{k}});\\
					&z^{k}_{\mathcal{G}\setminus\tilde{\mathcal{S}}^{k}}=0.
				\end{aligned}
				\right.
		\end{equation}}
		\STATE{4. Compute $y^k$:
			$y^{k}=z^{k}_{\tilde{\mathcal{S}}^{k}}-\frac{1}{\beta} (B^k)^\top (B^kz^{k}_{\tilde{\mathcal{S}}^{k}}-b)$.}
		\STATE{5. Compute $x^{k+1}$ by solving \eqref{eq_leftali}.
		}
		\STATE{6. If $\| x^{k+1}-x^{k}\|_{2}/\|x^{k}\|_{2}<\mathrm{Tol}$,
			break.
			
		}
		\ENDWHILE
		\RETURN $x^{k+1}$
	\end{algorithmic}
\end{algorithm}

\begin{remark}
	Here we wish to point out some differences between our  FITS$^3$ algorithm and 
	several typical existing non-convex non-Lipschitz group sparse recovery methods \cite{hu2017group,xue2019efficient,feng2020l2}. Our FITS$^3$ algorithm integrates thresholding, support-and-scale shrinking, linearization and extrapolation techniques, and is applicable to more general non-Lipschitz regularization  than $\ell_{p,q}$ regularization.
	The PGM-GSO in \cite{hu2017group} and the InISSAPL-GSO in \cite{xue2019efficient} consider the $\ell_{p,q}$ regularization, and the IRLS-th in \cite{feng2020l2} is applicable only to $\ell_{2,q}$ regularization.
	All of these three algorithms  do not use extrapolation acceleration.
	The PGM-GSO in \cite{hu2017group} depends on the proximal operator of regularization term, which is not always explicit.
	The InISSAPL-GSO in \cite{xue2019efficient}   is with a two-loop algorithmic structure for any $p,q$.
	Our  FITS$^3$ algorithm and PGM-GSO do not need to solve any linear system, while both InISSAPL-GSO and IRLS-th  solve a linear system at each iteration.
	
\end{remark}

\begin{remark}
	We mention that, we  got aware very recently of  a closely related efficient and sophisticated two-phase method  with a shrinkage phase and a subspace optimization phase  proposed in \cite{Wen2010}. There are some differences between our FITS$^3$ and the algorithm in \cite{Wen2010}. First, the algorithm in \cite{Wen2010} was  designed  for $\ell_1$ minimization in
	non-group sparse reconstruction, while our FITS$^3$ is for  group sparse recovery with a general non-Lipschitz regularization. Second, their algorithm   uses continuation and efficient quadratic solvers at each subspace optimization step, while our FITS$^3$ integrates extrapolation acceleration and linearization  technique.
\end{remark}
\section{Convergence analysis}
\label{sec:con-ana}
In this section, we give the convergence analysis of  the  proposed FITS$^3$ algorithm.
Our proof relies on  K\L~property~\cite{Kurdyka1998gradients,Lojasiewicz1963Une} of a function (or K\L~function) which has extensive applications in 
analyzing non-convex optimization algorithms  \cite{Attouch2009convergence,Attouch2010Proximal,Attouch2013Convergence,Bolte2014Proximal,
	Zhao2022,Ochs2014iPiano,feng2020l2}. 

For convenience of description, we introduce the following auxiliary functions
\begin{equation*}
	\begin{split}
		\calF^k(x)
		& =
		\frac{1}{2} \| A z^k -b \|_{2}^{2}
		+(x-z^k)^\top A^\top(Az^k-b) + \frac{\beta}{2}\|x-z^k\|_{2}^{2} \\
		& \phantom{=;}  + \alpha \sum_{\slg_i  \in \tilde{\mathcal{S}}^{k}} \left[\psi(\|\tilde{x}_{\slg_i}^{k}\|_p)+\psi^{\prime}(\|\tilde{x}_{\slg_i}^{k}\|_p)(\|x_{\slg_i}\|_p-\|\tilde{x}_{\slg_i}^{k}\|_p)\right],\quad k=0,1,2,\cdots,
	\end{split}
\end{equation*}
which can be decomposed into $\calF^k(x)=\calJ_0^k(x)+\sum_{\slg_i  \in \tilde{\mathcal{S}}^{k}} \calJ_{\slg_i}^k(x_{\slg_i})$  with
\begin{equation*}
	\begin{split}
		\calJ_0^k(x)
		& =\frac{1}{2} \| A z^k -b \|_{2}^{2}+
		\sum_{\slg_i  \in \mathcal{G} \setminus \tilde{\mathcal{S}}^{k}} 
		(x_{\slg_i}-z^k_{\slg_i})^\top( A^\top(Az^k-b))_{\slg_i} + \frac{\beta}{2}\sum_{\slg_i \in \mathcal{G} \setminus \tilde{\mathcal{S}}^{k}}\|x_{\slg_i}-z^k_{\slg_i}\|_{2}^{2},\\
	\calJ_{\slg_i}^k(x_{\slg_i})
	& =
	(x_{\slg_i}-z^k_{\slg_i})^\top( A^\top(Az^k-b))_{\slg_i} + \frac{\beta}{2}\|x_{\slg_i}-z^k_{\slg_i}\|_{2}^{2} \\
	&\quad+ \alpha \left[\psi(\|\tilde{x}_{\slg_i}^{k}\|_p)+\psi^{\prime}(\|\tilde{x}_{\slg_i}^{k}\|_p)(\|x_{\slg_i}\|_p
	-\|\tilde{x}_{\slg_i}^{k}\|_p)\right].
\end{split}
\end{equation*}
Note that $x_{\slg_i}^{k+1}$, $\slg_i  \in \tilde{\mathcal{S}}^{k}$ in \eqref{eq_leftali} is essentially updated by solving
\begin{equation}\label{eq:f}
\begin{aligned}
	&\min_{x_{\slg_i}}\, \calJ_{\slg_i}^k(x_{\slg_i}).
\end{aligned}
\end{equation}
We will establish our convergence result under the following (inexact) condition that
\begin{equation}\label{eq_inexact_condition}
\lambda^{k+1}_{\slg_i}\in\partial \calJ_{\slg_i}^k(x^{k+1}_{\slg_i})\,\,\mbox{with}\,\,\|\lambda^{k+1}_{\slg_i}\|_2
\le\frac{\beta\varepsilon}2\|x^{k+1}_{\slg_i}-\tilde x^{k}_{\slg_i}\|_2, \quad \slg_i\in\tilde{\mathcal{S}}^{k}
\end{equation}	
exists for a given $\varepsilon\in [0,1)$, since   the subproblem \eqref{eq:f} needs an iterative solver for $p$ rather than 1 and 2. 
It is worthy to notice that \eqref{eq_inexact_condition} implies
\begin{equation}\label{eq_inexact_condition2}
\|\lambda^{k+1}_{\tilde{\mathcal{S}}^{k}}\|_2
\le\frac{\beta\varepsilon}2\|x^{k+1}_{\tilde{\mathcal{S}}^{k}}-\tilde x^{k}_{\tilde{\mathcal{S}}^{k}}\|_2,
\end{equation}
no matter how $\lambda^{k+1}_{\slg_i},$ $\slg_i\in \mathcal{G} \setminus \tilde{\mathcal{S}}^{k}$ are chosen. Because of the (inexact) condition, we let in Algorithm \ref{alg_accelerated_ITS3}
${\bar t}\overset\triangle={\bar t}(\varepsilon)=\frac{\sqrt{(\beta+\|A^\top A\|_2)^2-4\beta\varepsilon\|A^\top A\|_2}-(\beta-\|A^\top A\|_2)}{2\|A^\top A\|_2}\in(0,1]$ for the latter convergence analysis. 

\begin{remark}
If $p=1$ or $p=2$ in our 
FITS$^3$ algorithm, we can let $\varepsilon=0$ in \eqref{eq_inexact_condition} and  \eqref{eq_inexact_condition2}, and thus have ${\bar t}=1$, due to the closed form solutions given in Section \ref{sec:algo}.
\end{remark}

We first observe a finite convergence property of the group support sets. According to  Algorithm~\ref{alg_accelerated_ITS3},  $\tilde{\mathcal{S}}^{k+1}\subseteq \mathcal{S}^{k+1}\subseteq\tilde{\mathcal{S}}^{k}\subseteq \mathcal{S}^{k}$ holds, like \cite{article11} and references therein. Since $\mathcal{G}$ is a finite set and $\tilde{\mathcal{S}}^{k}\subseteq \mathcal{S}^{k}\subseteq\mathcal{G}$,  there exists an integer $K> 0$ such that
\begin{equation}\label{eq_decrease_mont}
\tilde{\mathcal{S}}^{k}\equiv \mathcal{S}^{k} \equiv  \mathcal{S}^{K},\;\forall k \geq K.
\end{equation}	
Subsequently, one has $\tilde{x}^{k} \equiv x^{k},\;\forall k \geq K$.

We now  present some lemmas where the following value function 
\begin{equation}\label{eq_ht}
\mathcal{H}(x,u)=\calE(x)+\frac{\beta(1-\varepsilon)}{2}\|x-u\|_{2}^{2},
\end{equation}
plays a key role.

\begin{lemma}\label{lem-bound-sufficient-decrease}
For any $\beta>\|A^{\top}A\|_2$ and the extrapolation parameters $\set{t_{k}}\subset[0,{\bar t})$ satisfying $\hat{t}=\sup_{k} t_k<{\bar t}$,
let $\set{x^{k}}$ be a sequence generated by FITS$^3$. Then, the following properties hold:
\begin{enumerate}[(i)]
	\item For any $k\geq K+1$, the sequence $\set{\mathcal{H}(x^{k},x^{k-1})}$ is monotonically nonincreasing. In particular, it holds that
{\small
		\begin{equation*}
		\mathcal{H}(x^{k},x^{k-1})-\mathcal{H}(x^{k+1},x^{k})\geq \left(-\frac{\|A^{\top}A\|_2}{2}\hat{t}^2-\frac{\beta-\|A^{\top}A\|_2}{2}\hat{t}
		+\frac{\beta(1-\varepsilon)}{2}\right)\|x^{k}-x^{k-1}\|_2^2.
	\end{equation*}
}
	\item $\lim_{k \to \infty} \| x^{k} - x^{k-1} \|_{2} = 0$ and the sequence $\set{x^k}$ is bounded. 
	\item $\zeta:=\lim_{k \to \infty}\calE(x^k)$ exists. Moreover, $\calE\equiv \zeta$ on $\Omega$, where $\Omega$ is the set of accumulation points of $\set{x^k}$.
	\item $\lim_{k \to \infty}\mathcal{H}(x^k,x^{k-1})=\zeta$. Moreover, $\calH\equiv \zeta$ on $\Upsilon$, where $\Upsilon:=\set{(x,x):x\in \Omega}$ is the set of accumulation points of $\set{(x^k,x^{k-1})}$.
\end{enumerate}
\end{lemma}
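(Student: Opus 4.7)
The plan is to prove (i) first via a majorize--minimize--measure chain that is valid in the stable-support regime $k\ge K$, where $\tilde{\mathcal S}^k\equiv\mathcal S^K$ and $\tilde x^k\equiv x^k$, so that all the $\calF^k$ and $\calJ^k_{G_i}$ terms depend only on the fixed coordinate block $\mathcal S^K$ and $\calF^k$ is $\beta$-strongly convex there. Parts (ii)--(iv) will then fall out of (i) by summation, coercivity, and continuity.

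For (i) I would chain three inequalities. First, a majorization step $\calE(x^{k+1})\le\calF^k(x^{k+1})$: the identity $\tfrac12\|Ax-b\|_2^2=\tfrac12\|Az^k-b\|_2^2+\langle A^\top(Az^k-b),x-z^k\rangle+\tfrac12\|A(x-z^k)\|_2^2$ combined with $\|A(x-z^k)\|_2^2\le\|A^\top A\|_2\|x-z^k\|_2^2\le\beta\|x-z^k\|_2^2$ upper-bounds the fidelity, while the concave-tangent inequality from Assumption~\ref{ass_psi}(i) upper-bounds each $\psi(\|x_{G_i}\|_p)$ on $G_i\in\tilde{\mathcal S}^k$ (and $x^{k+1}_{G_i}=0,\ \psi(0)=0$ handles the complementary groups). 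Second, an inexact descent step $\calF^k(x^{k+1})\le\calF^k(x^k)-\tfrac{\beta(1-\varepsilon)}{2}\|x^{k+1}-x^k\|_2^2$: since each $\calJ^k_{G_i}$ is $\beta$-strongly convex, the standard inequality $\calJ^k_{G_i}(\tilde x^k_{G_i})\ge\calJ^k_{G_i}(x^{k+1}_{G_i})+\langle\lambda^{k+1}_{G_i},\tilde x^k_{G_i}-x^{k+1}_{G_i}\rangle+\tfrac{\beta}{2}\|\tilde x^k_{G_i}-x^{k+1}_{G_i}\|_2^2$, summed over $G_i\in\tilde{\mathcal S}^k$ and combined with Cauchy--Schwarz applied to the inexact bound \eqref{eq_inexact_condition2} (using $\tilde x^k=x^k$), yields the required descent. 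Third, a direct identity $\calF^k(x^k)=\calE(x^k)+\tfrac{\beta}{2}\|x^k-z^k\|_2^2-\tfrac12\|A(x^k-z^k)\|_2^2$ from the same quadratic expansion as in the first step; substituting $\|x^k-z^k\|_2=t_k\|x^k-x^{k-1}\|_2\le\hat t\|x^k-x^{k-1}\|_2$ and using $\|A(x^k-z^k)\|_2^2\le\|A^\top A\|_2\|x^k-z^k\|_2^2$ to handle the two $\hat t$-dependent terms, then rearranging into $\mathcal H(x^k,x^{k-1})-\mathcal H(x^{k+1},x^k)$, produces the claimed lower bound with coefficient $-\tfrac{\|A^\top A\|_2}{2}\hat t^2-\tfrac{\beta-\|A^\top A\|_2}{2}\hat t+\tfrac{\beta(1-\varepsilon)}{2}$. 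The choice of $\bar t$ is precisely the positive root of the corresponding quadratic, so this coefficient is strictly positive whenever $\hat t<\bar t$, giving the monotone decrease.

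Once (i) is in hand, (ii) follows by summing the descent inequality from $k=K+1$: the partial sums of $\|x^k-x^{k-1}\|_2^2$ are bounded by $\tfrac{1}{c}(\mathcal H(x^{K+1},x^K)-\inf_k\mathcal H(x^k,x^{k-1}))$, where $c>0$ is the decrease coefficient and the infimum is finite because $\mathcal H\ge\calE\ge0$; hence $\|x^k-x^{k-1}\|_2\to 0$, and boundedness of $\{x^k\}$ follows from $\calE(x^k)\le\mathcal H(x^k,x^{k-1})\le\mathcal H(x^{K+1},x^K)$ together with the coercivity of $\calE$ (noted after Assumption~\ref{ass_psi}(i)). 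For (iii), monotonicity plus the lower bound gives that $\mathcal H(x^k,x^{k-1})$ converges, and then $\calE(x^k)=\mathcal H(x^k,x^{k-1})-\tfrac{\beta(1-\varepsilon)}{2}\|x^k-x^{k-1}\|_2^2$ converges to the same limit $\zeta$ by (ii); continuity of $\calE$ (from continuity of $\psi$ on $[0,+\infty)$ and of each $\|\cdot\|_p$) transfers this to any accumulation point. Part (iv) is essentially immediate: $\mathcal H(x^k,x^{k-1})\to\zeta$ by construction, and for any $x^\ast\in\Omega$ the relation $\|x^{k_j}-x^{k_j-1}\|_2\to 0$ forces $x^{k_j-1}\to x^\ast$ along the same subsequence, so every accumulation point of $\{(x^k,x^{k-1})\}$ is of the form $(x^\ast,x^\ast)$ with $\mathcal H(x^\ast,x^\ast)=\calE(x^\ast)=\zeta$.

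The main obstacle will be the clean execution of the majorization step (a), because the concave-tangent bound on $\psi$ requires $\|\tilde x^k_{G_i}\|_p>0$ and so only applies on $\tilde{\mathcal S}^k$; one must verify that the complementary groups contribute zero to both sides (via $x^{k+1}_{G_i}=0$ and $\psi(0)=0$), so that the group-wise bound sums to a global majorization. A secondary but purely algebraic point is the constant matching: one has to identify the polynomial $\tfrac{\|A^\top A\|_2}{2}\hat t^2+\tfrac{\beta-\|A^\top A\|_2}{2}\hat t-\tfrac{\beta(1-\varepsilon)}{2}$ produced by the combination as the one whose positive root is exactly $\bar t$, which validates the requirement $\hat t<\bar t$ for the strict sufficient-decrease property.
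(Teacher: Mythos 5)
Your proposal is correct and follows essentially the same route as the paper: a three-link chain consisting of the quadratic upper bound on $\calF^k$ at $\tilde x^k=x^k$, the $\beta$-strong-convexity-plus-inexactness descent $\calF^k(x^k)\ge\calF^k(x^{k+1})+\tfrac{\beta(1-\varepsilon)}{2}\|x^{k+1}-x^k\|_2^2$, and the concave-tangent lower bound $\calF^k(x^{k+1})\ge\calE(x^{k+1})+\cdots$, followed by telescoping, coercivity and continuity for (ii)--(iv). The one place you should be careful is the constant accounting in (i). The chain you describe discards the nonnegative surplus $\tfrac{\beta-\|A^{\top}A\|_2}{2}\|x^{k+1}-z^{k}\|_2^2$ from the third link, and what it then literally yields is the decrease coefficient $\tfrac{\beta}{2}\bigl((1-\varepsilon)-\hat t^{\,2}\bigr)$, not the stated $-\tfrac{\|A^{\top}A\|_2}{2}\hat t^{\,2}-\tfrac{\beta-\|A^{\top}A\|_2}{2}\hat t+\tfrac{\beta(1-\varepsilon)}{2}$. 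This is harmless but needs one extra line: the difference of the two coefficients is $\tfrac{\beta-\|A^{\top}A\|_2}{2}\,\hat t\,(1-\hat t)\ge 0$ for $\hat t\in[0,1]$ (and $\bar t\le 1$), so your bound dominates the claimed one. The paper instead keeps that surplus, writes $x^{k+1}-z^{k}=(x^{k+1}-x^{k})-t_k(x^{k}-x^{k-1})$ on the stable support, and applies $\|a-tb\|_2^2\ge(1-t)\|a\|_2^2+t(t-1)\|b\|_2^2$; that is precisely what produces the stated coefficient whose positive root is $\bar t$. Either add the one-line domination argument or insert this expansion; with that, your proof is complete.
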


\begin{proof}
$(i)$ Letting $x=\tilde{x}^{k}$ in $\calF^k(x)$, we get
\begin{equation*}
	\calF^k(\tilde{x}^{k})=
	\frac{1}{2} \| Az^k - b \|_{2}^{2}
	+(\tilde{x}^{k}-z^k)^\top A^\top(Az^k-b) + \frac{\beta}{2}\|\tilde{x}^{k}-z^k\|_{2}^{2} + \alpha \sum_{\slg_i \in \tilde{\mathcal{S}}^{k}} \psi(\|\tilde{x}_{\slg_i}^{k}\|_p).
\end{equation*}
By the second-order Taylor expansion of $\frac{1}{2}\|Ax-b\|_{2}^{2}$
at $z^{k}$, we can derive that
\begin{equation*}
	\begin{split}
		\calF^k(\tilde{x}^{k})=\calE(\tilde{x}^k)-\frac{1}{2}(\tilde{x}^{k}-z^{k})^\top A^\top A(\tilde{x}^{k}-z^{k})+\frac{\beta}{2}\|\tilde{x}^{k}-z^{k}\|_{2}^{2}.
	\end{split}
\end{equation*}
Clearly, the following holds
\begin{equation}\label{eq_f1}
	\calF^k(\tilde{x}^{k})\leq \calE(\tilde{x}^k)+\frac{\beta}{2}\|\tilde{x}^{k}-z^{k}\|_{2}^{2}.
\end{equation}
Recalling $z^{k}$ in \eqref{eq_zkk}, one has
\begin{equation}\label{eq_f2}
	\begin{split}
		\|\tilde{x}^{k}-z^{k}\|_{2}^{2}=t_k^2\|\tilde{x}^{k}_{\tilde{\mathcal{S}}^{k}}-\tilde{x}^{k-1}_{\tilde{\mathcal{S}}^{k}}\|_{2}^{2}\leq t_k^2\|\tilde{x}^{k}-\tilde{x}^{k-1}\|_{2}^{2}.
	\end{split}
\end{equation}	
Combining \eqref{eq_f1} and \eqref{eq_f2}, we get
\begin{equation}\label{eq:mm-eq1}	
	\calF^k(\tilde{x}^{k})\leq\calE(\tilde{x}^k)+\frac{\beta}{2}t_k^2\|\tilde{x}^{k}-\tilde{x}^{k-1}\|_{2}^{2}.	
\end{equation}	

Now, we consider those $x$'s satisfying $\mathcal{S}(x) \subseteq \tilde{\mathcal{S}}^{k}$.
By the concavity of $\psi$ and the fact $\psi(\|x_{\slg_i}\|_p)=0,\;\forall \slg_i  \in \mathcal{G}\setminus\tilde{\mathcal{S}}^{k}$ when $\mathcal{S}(x) \subseteq \tilde{\mathcal{S}}^{k}$,
we have
\begin{equation}\label{eq_f3}
	\nonumber
	\begin{split}
		\calF^k(x)&\geq \frac{1}{2} \| Az^{k} - b \|_{2}^{2}
		+(x-z^{k})^\top A^\top(Az^{k}-b) + \frac{\beta}{2}\|x-z^{k}\|_{2}^{2} + \alpha \sum_{\slg_i  \in \tilde{\mathcal{S}}^{k}} \psi(\|x_{\slg_i}\|_p)\\
		&=\frac{1}{2} \| Az^{k} - b \|_{2}^{2}
		+(x-z^{k})^\top A^\top(Az^{k}-b) + \frac{\beta}{2}\|x-z^{k}\|_{2}^{2} + \alpha \sum_{\slg_i  \in \mathcal{G}} \psi(\|x_{\slg_i}\|_p).
	\end{split}
\end{equation}
Therefore, it holds that for $x$ satisfying $\mathcal{S}(x) \subseteq \tilde{\mathcal{S}}^{k}$,
\begin{equation}\label{eq:mm-eq2}
	\calF^k(x) \geq \calE(x)+ \frac{\beta-\|A^{\top}A\|_2}{2}\|x-z^{k}\|_{2}^{2}.
\end{equation}
%
Letting $x=x^{k+1}$ in \eqref{eq:mm-eq2}, we get
\begin{equation}\label{eq_f5}
	\calF^k(x^{k+1}) \geq \calE(x^{k+1})+ \frac{\beta-\|A^{\top}A\|_2}{2}\|x^{k+1}-z^{k}\|_{2}^{2}.
\end{equation}
By the definition of $z^{k}$, we have
\begin{equation}\label{eq_f6}
	\|x^{k+1}-z^{k}\|_{2}^{2}=\|x^{k+1}_{\tilde{\mathcal{S}}^{k}}-z^{k}_{\tilde{\mathcal{S}}^{k}}\|_{2}^{2}=\|(x^{k+1}_{\tilde{\mathcal{S}}^{k}}-\tilde{x}^{k}_{\tilde{\mathcal{S}}^{k}})-t_k(\tilde{x}^{k}_{\tilde{\mathcal{S}}^{k}}-\tilde{x}^{k-1}_{\tilde{\mathcal{S}}^{k}})\|_2^2.
\end{equation}
Applying the inequality $\|a-tb\|_2^2\geq (1-t)\|a\|_2^2+t(t-1)\| b\|_2^2~ (t\ge0)$ to \eqref{eq_f6} yields
\begin{equation}\label{eq_f7}
	\|(x^{k+1}_{\tilde{\mathcal{S}}^{k}}-\tilde{x}^{k}_{\tilde{\mathcal{S}}^{k}})-t_k(\tilde{x}^{k}_{\tilde{\mathcal{S}}^{k}}-\tilde{x}^{k-1}_{\tilde{\mathcal{S}}^{k}})\|_2^2\geq (1-t_k)\|x^{k+1}_{\tilde{\mathcal{S}}^{k}}-\tilde{x}^{k}_{\tilde{\mathcal{S}}^{k}}\|_2^2+t_k(t_k-1)\|\tilde{x}^{k}_{\tilde{\mathcal{S}}^{k}}-\tilde{x}^{k-1}_{\tilde{\mathcal{S}}^{k}}\|_2^2.
\end{equation}
Obviously, the following relationships hold
\begin{equation}\label{eq_relation}
	\|x^{k+1}_{\tilde{\mathcal{S}}^{k}}-\tilde{x}^{k}_{\tilde{\mathcal{S}}^{k}}\|_2^2=\|x^{k+1}-\tilde{x}^{k}\|_2^2,\; \|\tilde{x}^{k}_{\tilde{\mathcal{S}}^{k}}-\tilde{x}^{k-1}_{\tilde{\mathcal{S}}^{k}}\|_2^2\leq \|\tilde{x}^{k}-\tilde{x}^{k-1}\|_2^2.
\end{equation}
Combining \eqref{eq_f5}-\eqref{eq_relation} and using $t_k\in[0,\bar t), \beta>\|A^{\top}A\|_2$, we get
\begin{equation}\label{eq:mm-eqk}
	\begin{split}
		\calF^k(x^{k+1})&\geq \calE(x^{k+1})+\frac{\beta-\|A^{\top}A\|_2}{2}(1-t_{k})\|x^{k+1}-\tilde{x}^{k}\|_2^2\\
		&\quad+\frac{\beta-\|A^{\top}A\|_2}{2}t_k(t_k-1)\|\tilde{x}^{k}-\tilde{x}^{k-1}\|_2^2.
	\end{split}
\end{equation}

Since $\calJ_{\slg_i}^k(x_{\slg_i})$ is a $\beta$-strongly convex function and $x^{k+1}_{\slg_i}$ solves \eqref{eq:f} satisfying \eqref{eq_inexact_condition}, one has
\begin{equation*}
	\begin{split}
		\calJ_{\slg_i}^k(\tilde{x}^{k}_{\slg_i})\geq& \calJ_{\slg_i}^k(x^{k+1}_{\slg_i})+\langle\lambda^{k+1}_{\slg_i},\tilde{x}^{k}_{\slg_i}-x^{k+1}_{\slg_i}\rangle
		+\frac{\beta}{2}\|\tilde{x}^{k}_{\slg_i}-x^{k+1}_{\slg_i}\|_2^2\\
		\geq& \calJ_{\slg_i}^k(x^{k+1}_{\slg_i})-\|\lambda^{k+1}_{\slg_i}\|\|\tilde{x}^{k}_{\slg_i}-x^{k+1}_{\slg_i}\|
		+\frac{\beta}{2}\|\tilde{x}^{k}_{\slg_i}-x^{k+1}_{\slg_i}\|_2^2\\
		\geq& \calJ_{\slg_i}^k(x^{k+1}_{\slg_i})+\frac{\beta(1-\varepsilon)}{2}\|\tilde{x}^{k}_{\slg_i}-x^{k+1}_{\slg_i}\|_2^2.\\
	\end{split}
\end{equation*}
Furthermore, $\calJ_0^k(\tilde{x}^{k})=\calJ_0^k(x^{k+1})=\frac{1}{2} \| A z^k -b \|_{2}^{2}$.
Therefore, we have 
\begin{equation}\label{eq:suppequation2}
	\begin{split}
		\calF^k(\tilde{x}^{k})
		\geq& \calF^k(x^{k+1})+\frac{\beta(1-\varepsilon)}{2}\|\tilde{x}^{k}-x^{k+1}\|_2^2.\\
	\end{split}
\end{equation}
Combining~\eqref{eq:mm-eq1} \eqref{eq:mm-eqk} \eqref{eq:suppequation2}, we get
\begin{equation*}
	\begin{split}
		\calE(\tilde{x}^k)+\frac{\beta}{2}t_k^2\|\tilde{x}^{k}-\tilde{x}^{k-1}\|_{2}^{2}\geq & \calE(x^{k+1})+\frac{\beta-\|A^{\top}A\|_2}{2}(1-t_{k})\|x^{k+1}-\tilde{x}^{k}\|_2^2\\
		& +\frac{\beta-\|A^{\top}A\|_2}{2}t_k(t_k-1)\|\tilde{x}^{k}-\tilde{x}^{k-1}\|_2^2
		+\frac{\beta(1-\varepsilon)}{2}\|\tilde{x}^{k}-x^{k+1}\|_2^2\\
		\geq &\calE(x^{k+1})+\frac{\beta-\|A^{\top}A\|_2}{2}t_k(t_k-1)\|\tilde{x}^{k}-\tilde{x}^{k-1}\|_2^2\\
		&+\frac{\beta(1-\varepsilon)}{2}\|\tilde{x}^{k}-x^{k+1}\|_2^2.
	\end{split}
\end{equation*}
%
	By the definition of $\mathcal{H}$ in \eqref{eq_ht}, 
	the above inequality can be written as
	\begin{equation*}
		\mathcal{H}(\tilde{x}^{k},\tilde{x}^{k-1})-\mathcal{H}(x^{k+1},\tilde{x}^{k})\geq \left(-\frac{\|A^{\top}A\|_2}{2}t_{k}^2-\frac{\beta-\|A^{\top}A\|_2}{2}t_{k}
		+\frac{\beta(1-\varepsilon)}{2}\right)\|\tilde{x}^{k}-\tilde{x}^{k-1}\|_2^2.
	\end{equation*}
	
	Since $\tilde{x}^{k}=x^{k}$ and $\tilde{x}^{k-1}=x^{k-1}$ hold when $k\geq K+1$, we have, for $\forall k\geq K+1$,
	\begin{equation}\label{eq:sufficient-decrease-condition_copy2}
		\mathcal{H}(x^{k},x^{k-1})-\mathcal{H}(x^{k+1},x^{k})\geq \left(-\frac{\|A^{\top}A\|_2}{2}t_{k}^2-\frac{\beta-\|A^{\top}A\|_2}{2}t_{k}
		+\frac{\beta(1-\varepsilon)}{2}\right)
		\|x^{k}-x^{k-1}\|_2^2.
	\end{equation}
	Due to our assumption that $\set{t_{k}}\subset[0,{\bar t})$ and  $\hat{t}=\sup_{k} t_k<{\bar t}$, the following holds
	\begin{equation}\label{eq:lowbound}
		\begin{split}
		-\frac{\|A^{\top}A\|_2}{2}t_{k}^2-\frac{\beta-\|A^{\top}A\|_2}{2}t_{k}+\frac{\beta(1-\varepsilon)}{2}&\geq \left(-\frac{\|A^{\top}A\|_2}{2}\hat{t}^2-\frac{\beta-\|A^{\top}A\|_2}{2}\hat{t}
		+\frac{\beta(1-\varepsilon)}{2}\right)\\
		&>0.
		\end{split}
	\end{equation}
	Therefore, for any $k\geq K+1$, the sequence $\set{\mathcal{H}(x^{k},x^{k-1})}$ is monotonically nonincreasing. 
	
	$(ii)$
	Summing up both sides of \eqref{eq:sufficient-decrease-condition_copy2} from $k=K+1$ to $N$ and using \eqref{eq:lowbound}, we get
	\begin{equation*}
		\begin{split}
		&\left(-\frac{\|A^{\top}A\|_2}{2}\hat{t}^2-\frac{\beta-\|A^{\top}A\|_2}{2}\hat{t}+
		\frac{\beta(1-\varepsilon)}{2}\right)\sum_{k=K+1}^{N}\| x^{k} - x^{k-1} \|_{2}^{2}\\
		&\leq\mathcal{H}(x^{K+1},x^{K})-\mathcal{H}(x^{N+1},x^{N}).
		\end{split}
	\end{equation*}
	Let $N \to \infty$ yields
	\begin{equation*}
		\left(-\frac{\|A^{\top}A\|_2}{2}\hat{t}^2-\frac{\beta-\|A^{\top}A\|_2}{2}\hat{t}
		+\frac{\beta(1-\varepsilon)}{2}\right)\sum_{k=K+1}^{\infty}\| x^{k} - x^{k-1} \|_{2}^{2}\leq\mathcal{H}(x^{K+1},x^{K}).
	\end{equation*}
	It follows that $\sum_{k=0}^{\infty}\| x^{k+1} - x^{k} \|_{2}^{2}<+\infty$. Thus
	$\lim_{k \to \infty} \| x^{k+1} - x^{k} \|_{2} = 0$. For any $k\geq K+1$, the sequence $\set{\mathcal{H}(x^{k},x^{k-1})}$ is monotonically nonincreasing, so we have
	$$
	\calE(x^k)\leq \mathcal{H}(x^{k},x^{k-1})\leq \mathcal{H}(x^{K+1},x^{K}),\; \forall k\geq K+1.
	$$
	Therefore, the sequence $\set{x^k}_{k\geq K+1}$ is bounded due to  the coercivity of $\calE(x)$. Furthermore, the whole sequence $\set{x^k}$ is also bounded, because $K$ is a finite integer.
	
	
	$(iii)$  The nonnegative sequence $\set{\mathcal{H}(x^{k},x^{k-1})}_{k\geq K+1}$ is monotonically nonincreasing, so  $\set{\mathcal{H}(x^{k},x^{k-1})}$ is convergent. Then, the sequence $\set{\calE(x^{k})}$ is also convergent due to  $\lim_{k \to \infty} \| x^{k+1}$ $ - x^{k} \|_{2} = 0$. Thus $\zeta:=\lim_{k \to \infty}\calE(x^k)$ exists. Since $\calE$ is a continuous funciton, one has $\calE\equiv \zeta$ on $\Omega$.
	
	$(iv)$ Since $\lim_{k \to \infty}\calE(x^k)=\zeta$ and $\lim_{k \to \infty} \| x^{k+1} - x^{k} \|_{2} = 0$, thus $\lim_{k \to \infty}\mathcal{H}(x^k,x^{k-1})=\zeta$. Since $\mathcal{H}$ is a continuous function, then $\calH\equiv \zeta$ on $\Upsilon$.
	This completes the proof.
\end{proof}

Before proceeding, we calculate the subdifferential of $\calJ_{\slg_i}^k(x_{\slg_i})$, $\slg_{i}\in \tilde{\mathcal{S}}^{k}$ to consider \eqref{eq_inexact_condition}. In particular, we have
\begin{equation}\label{eq_optimalcondition}
	\lambda^{k+1}_{\slg_{i}}\in (A^\top (Az^{k}-b))_{\slg_{i}}+\beta (x^{k+1}_{\slg_{i}}-z^{k}_{\slg_{i}})+\alpha \psi^{\prime} (\|\tilde{x}_{\slg_{i}}^{k}\|_p) \partial \|\cdot\|_p(x^{k+1}_{\slg_{i}}),\; \forall \slg_{i}\in \tilde{\mathcal{S}}^{k}.
\end{equation}
Since for any $k\geq K$, $\tilde{\mathcal{S}}^{k}=\mathcal{S}^{k}$ and $\tilde{x}^{k} = x^{k}$ hold, by \eqref{eq_subdiff},  \eqref{eq_optimalcondition} indicates
\begin{itemize}
	\item
	For $p>1,\forall j\in \slg_i$ with $\slg_i\in \mathcal{S}^{k}, k \geq K$, we have
	\begin{equation}\label{eq_optcon2}
		\begin{split}
		\lambda^{k+1}_{j} =&(A^\top(Az^k-b))_{j}+ \beta(x^{k+1}-z^k)_{j}\\
	&+ \alpha \psi^{\prime} (\|x_{\slg_i}^{k}\|_p)\|x_{\slg_{i}}^{k+1}\|_p^{1-p}|x_{j}^{k+1}|^{p-1}\sgn(x_{j}^{k+1});
		\end{split}
	\end{equation}
	\item
	For $p=1,\forall j\in s(x^{k+1}_{\slg_i})$ with $ \slg_i  \in \mathcal{S}^{k}, k \geq K$, we have
	\begin{equation}\label{eq_optcon3}
		\begin{split}
			\lambda^{k+1}_{j}=(A^\top(Az^k-b))_{j} + \beta(x^{k+1}-z^k)_{j}
			+ \alpha \psi^{\prime} (\|x_{\slg_{i}}^{k}\|_1)\sgn(x_{j}^{k+1});
		\end{split}
	\end{equation}
	\item
	For $p=1,\forall j\in\slg_i\backslash s(x^{k+1}_{\slg_i})$ with $ \slg_i  \in \mathcal{S}^{k}, k \geq K$, we have
	\begin{equation}\label{eq_optcon4}
		\begin{split}
			\lambda^{k+1}_{j}\in(A^\top(Az^{k}-b))_{j} + \beta(x^{k+1}-z^k)_{j}
			+ \alpha [-\psi^{\prime} (\|x_{\slg_{i}}^{k}\|_1),\psi^{\prime} (\|x_{\slg_{i}}^{k}\|_1)].
		\end{split}
	\end{equation}
\end{itemize}
The next lemma provides a lower bound and upper bound theory for the iterative sequence $\set{x^k}$.
\begin{lemma}\label{lem-bound-theory}
	For any $\beta>\|A^{\top}A\|_2$ and the extrapolation parameters $\set{t_{k}}\subset[0,{\bar t})$ satisfying $\sup_{k} t_k<{\bar t}$,
	let $\set{x^{k}}$ be a sequence generated by FITS$^3$.
	Then for any $k\geq K+1$,
	\begin{enumerate}[(i)]
		\item  $\|x_{\slg_i}^{k}\|_p\le \overline {c}:=\psi^{-1}\left( \frac{\mathcal{H}(x^{K+1},x^{K})}{\alpha}\right)$, for any $\slg_i \in \mathcal{G}$.
		\item $\|x_{\slg_i}^{k}\|_p\ge\underline {c}:=\max\left\{(\psi^{\prime})^{-1}\left(\frac{ (2\|A^\top A\|_2+\beta(4+\varepsilon)+2\|A \|_2\sqrt{\beta(1-\varepsilon)})\sqrt{n\mathcal{H}(x^{K+1},x^{K})}}
		{\alpha\sqrt{2\beta(1-\varepsilon)}}\right), \tau\right\}$, for any $\slg_{i} \in \mathcal{S}^{k}$.
	\end{enumerate}
\end{lemma}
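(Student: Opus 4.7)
The plan is to prove the two parts separately. Part (i) follows quickly from the energy descent in Lemma~\ref{lem-bound-sufficient-decrease}, while part (ii) is more involved, drawing on the subgradient optimality conditions \eqref{eq_optcon2}--\eqref{eq_optcon4} together with the inexact estimate \eqref{eq_inexact_condition2}.

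For (i), I combine two observations. First, monotonicity from Lemma~\ref{lem-bound-sufficient-decrease}(i) gives $\mathcal{H}(x^k,x^{k-1})\le\mathcal{H}(x^{K+1},x^K)$ for all $k\ge K+1$. Second, $\mathcal{H}(x^k,x^{k-1})\ge\calE(x^k)\ge\alpha\psi(\|x^k_{\slg_i}\|_p)$, since the remaining summands of $\calE$ are nonnegative. Hence $\psi(\|x^k_{\slg_i}\|_p)\le\mathcal{H}(x^{K+1},x^K)/\alpha$. Assumption~\ref{ass_psi} makes $\psi$ a strictly increasing bijection of $[0,+\infty)$ onto itself (continuous with $\psi(0)=0$, $\psi'>0$ on $(0,+\infty)$, and coercive), so applying the increasing $\psi^{-1}$ yields the upper bound $\overline{c}$.

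For (ii), the $\tau$-part of the maximum is immediate: for $k\ge K+1$, \eqref{eq_decrease_mont} gives $\mathcal{S}^k=\tilde{\mathcal{S}}^k$, so $\slg_i\in\mathcal{S}^k$ forces $\tilde{x}^k_{\slg_i}\ne0$, and the thresholding step then guarantees $\|x^k_{\slg_i}\|_p=\|\tilde{x}^k_{\slg_i}\|_p\ge\tau$. For the $(\psi')^{-1}$-part, the core algebraic identity is that for $p>1$ and $x^{k+1}_{\slg_i}\ne0$ (which holds by \eqref{eq_decrease_mont}), the element of $\partial\|\cdot\|_p(x^{k+1}_{\slg_i})$ appearing in \eqref{eq_optcon2} has unit $\ell_{p/(p-1)}$-norm on $\slg_i$. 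Taking this norm of \eqref{eq_optcon2} and rearranging produces
\[
\alpha\psi'(\|x^k_{\slg_i}\|_p)=\|\lambda^{k+1}_{\slg_i}-(A^\top(Az^k-b))_{\slg_i}-\beta(x^{k+1}-z^k)_{\slg_i}\|_{p/(p-1)},
\]
and for $p=1$, any $j\in s(x^{k+1}_{\slg_i})\ne\emptyset$ together with $|\sgn(x^{k+1}_j)|=1$ and \eqref{eq_optcon3} gives the analogue with $\ell_\infty$. The triangle inequality combined with $\|w\|_{p/(p-1)}\le\sqrt{n}\|w\|_2$ (Lemma~\ref{lemma_inequa} together with $n^{1/p^*-1/2}\le\sqrt{n}$) then gives an upper bound in terms of $\|\lambda^{k+1}\|_2$, $\|A^\top(Az^k-b)\|_2$, and $\|x^{k+1}-z^k\|_2$.

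The remaining step is to bound these three $\ell_2$-norms for $k\ge K+1$. From \eqref{eq_inexact_condition2} and $\tilde{x}^k=x^k$, one has $\|\lambda^{k+1}\|_2\le\tfrac{\beta\varepsilon}{2}\|x^{k+1}-x^k\|_2$. Since $\tilde{x}^{k-1}=x^{k-1}$ for $k\ge K+1$, the extrapolation formula reduces to $z^k=x^k+t_k(x^k-x^{k-1})$, which yields $\|A^\top(Az^k-b)\|_2\le\|A\|_2\|Ax^k-b\|_2+t_k\|A^\top A\|_2\|x^k-x^{k-1}\|_2$ and $\|x^{k+1}-z^k\|_2\le\|x^{k+1}-x^k\|_2+t_k\|x^k-x^{k-1}\|_2$. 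Consecutive differences are controlled by $\tfrac{\beta(1-\varepsilon)}{2}\|x^j-x^{j-1}\|_2^2\le\mathcal{H}(x^j,x^{j-1})\le\mathcal{H}(x^{K+1},x^K)$, while $\|Ax^k-b\|_2\le\sqrt{2\calE(x^k)}\le\sqrt{2\mathcal{H}(x^{K+1},x^K)}$. Substituting and using $t_k\le\hat{t}<\bar{t}\le1$ should precisely reproduce the coefficient $2\|A^\top A\|_2+\beta(4+\varepsilon)+2\|A\|_2\sqrt{\beta(1-\varepsilon)}$ divided by $\alpha\sqrt{2\beta(1-\varepsilon)}$ and multiplied by $\sqrt{n\mathcal{H}(x^{K+1},x^K)}$. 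Since Assumption~\ref{ass_psi}(iii) together with concavity makes $\psi'$ strictly decreasing on $(0,+\infty)$, $(\psi')^{-1}$ is decreasing; applying it reverses the inequality, and combining with the $\tau$ bound via the maximum gives the stated $\underline{c}$. The main obstacle is the arithmetic bookkeeping to produce the exact coefficient; conceptually the subtle point is the subgradient identity $\|v\|_{p/(p-1)}=1$, which restricts all $p$-dependence to a single norm conversion that contributes the $\sqrt{n}$.
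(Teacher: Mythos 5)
Your proposal is correct and follows essentially the same route as the paper: part (i) via the monotonicity of $\mathcal{H}$ and inversion of $\psi$, and part (ii) via the thresholding bound $\tau$ together with the optimality conditions \eqref{eq_optcon2}--\eqref{eq_optcon3}, the inexact condition \eqref{eq_inexact_condition2}, and the $\mathcal{H}$-based bounds on $\|x^{k+1}-x^k\|_2$, $\|x^k-x^{k-1}\|_2$ and $\|Ax^k-b\|_2$. The only (cosmetic) difference is that for $p>1$ you normalize the subgradient exactly via the dual-norm identity $\|v\|_{p/(p-1)}=1$, whereas the paper sums the componentwise equations in $\ell_1$ and uses the inequality $\|x_{\slg_i}^{k+1}\|_p^{1-p}\|x_{\slg_i}^{k+1}\|_{p-1}^{p-1}\geq 1$ from Lemma~\ref{lemma_inequa}; both reduce to the same $\sqrt{n}\,\|\cdot\|_2$ estimate and the same constant.
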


\begin{proof}
	Since in Lemma~\ref{lem-bound-sufficient-decrease}, the sequence $\set{\mathcal{H}(x^{k},x^{k-1})}$ is nonincreasing for any $k\geq K+1$, one has
	\begin{equation}\label{eq:cC}
		\begin{split}
			\mathcal{H}(x^{k},x^{k-1})\leq \mathcal{H}(x^{K+1},x^{K}),\; \forall k\geq K+1,
		\end{split}
	\end{equation}
	which indicates $\alpha\psi(\|x_{\slg_i}^{k}\|_p)\leq \mathcal{H}(x^{K+1},x^{K})$. Then, one has
	\begin{equation}\label{eq:C}
		\nonumber
		\begin{split}
			\|x_{\slg_i}^{k}\|_p\leq \psi^{-1}\left( \frac{\mathcal{H}(x^{K+1},x^{K})}{\alpha}\right):=\overline {c},\; \forall k\geq K+1.
		\end{split}
	\end{equation}
	
	We next prove that any $\|x_{\slg_i}^{k} \|_p\neq 0$ with $\slg_i \in \mathcal{S}^{k}, k\geq K$ has a uniform lower bound. Obviously, $\|x_{\slg_i}^{k} \|_p$ with $\slg_i \in \mathcal{S}^{k}, k\geq K$ has a lower bound $\tau$. In the following, we show that $\|x_{\slg_i}^{k} \|_p$ with $\slg_i \in \mathcal{S}^{k}, k\geq K$  has also a positive lower bound independent of $\tau$.  We discuss two cases with $p>1$ and $p=1$ separately.
	
	Firstly, we consider that $p>1$.
	It follows from \eqref{eq_optcon2} that for any $j\in \slg_{i}$ with $\slg_{i} \in \mathcal{S}^{k}, k\geq K$,
	\begin{equation*}
		\alpha \psi^{\prime} (\|x_{\slg_{i}}^{k}\|_p)\|x_{\slg_{i}}^{k+1}\|_p^{1-p}|x_{j}^{k+1}|^{p-1}=\left|\left(A^\top(Az^{k}-b) + \beta(x^{k+1}-z^{k})-\lambda^{k+1}\right)_{j}\right|.
	\end{equation*}
	Adding up the above equations for all $j\in \slg_{i}$ with $\slg_{i} \in \mathcal{S}^{k}, k\geq K$, we get
	\begin{equation*}
		\alpha \psi^{\prime} (\|x_{\slg_{i}}^{k}\|_p)\|x_{\slg_{i}}^{k+1}\|_p^{1-p}\|x_{\slg_{i}}^{k+1}\|_{p-1}^{p-1}
		=\left\|\left(A^\top(Az^{k}-b) + \beta(x^{k+1}-z^{k})-\lambda^{k+1}\right)_{\slg_{i}}\right\|_1.
	\end{equation*}
	Due to	Lemma \ref{lemma_inequa}, we see $\|x_{\slg_{i}}^{k+1}\|_p^{1-p}\|x_{\slg_{i}}^{k+1}\|_{p-1}^{p-1}\geq 1$.	
	Therefore, by Lemma \ref{lemma_ineq2},	we have for $\slg_{i} \in \mathcal{S}^{k}, k\geq K+1$ that
	\begin{equation}\label{eq:L4-eqqq}
		\begin{split}
			\alpha \psi^{\prime} (\|x_{\slg_{i}}^{k}\|_p)\leq&\left\|\left(A^\top(Az^{k}-b) + \beta(x^{k+1}-z^{k})-\lambda^{k+1}\right)_{\slg_{i}}\right\|_1\\
			\leq&\|A^\top(Az^{k}-b)\|_1 + \beta\|x^{k+1}-z^{k})\|_1+\|\lambda^{k+1}_{{\mathcal{S}}^{k}}\|_1\\
			[~\text{by~\eqref{eq_inexact_condition2}}~]~\le&\sqrt{n} \|A^\top(Az^{k}-Ax^{k}+Ax^{k}-b)\|_{2}+\sqrt{n}\beta\|(x^{k+1}_{\mathcal{S}^{k}}-
			x^{k}_{\mathcal{S}^{k}})-t_k(x^{k}_{\mathcal{S}^{k}}-x^{k-1}_{\mathcal{S}^{k}})\|_2\\
			&+\frac{\beta\varepsilon}2\sqrt{n}\|x^{k+1}-\tilde x^k\|_2\\
			\leq& \sqrt{n}(\|A^\top A\|_2\|x^{k}-x^{k-1}\|_2+\|A\|_2\|Ax^{k}-b\|_2)+\sqrt{n}\beta (\|x^{k+1}-x^{k}\|_2\\
			&+\|x^{k}-x^{k-1}\|_2)+\frac{\beta\varepsilon}2\sqrt{n}\|x^{k+1}- x^k\|_2.\\
		\end{split}
	\end{equation}
	By \eqref{eq:cC}, one has $\|x^{k+1}-x^{k}\|_2\leq \sqrt{2\mathcal{H}(x^{K+1},x^{K})/\beta(1-\varepsilon)}$, $\|x^{k}-x^{k-1}\|_2\leq \sqrt{2\mathcal{H}(x^{K+1},x^{K})/\beta(1-\varepsilon)}$ and $\|Ax^{k}-b\|_2\leq \sqrt{2\mathcal{H}(x^{K+1},x^{K})}$ for $k\ge K+1$.
	Therefore, we get for $k\ge K+1$,
	\begin{equation*}
		\psi^{\prime} (\|x_{\slg_{i}}^{k}\|_p)\leq\frac{ \sqrt{n}(2\|A^\top A\|_2+\beta(4+\varepsilon)+2\|A \|_2\sqrt{\beta(1-\varepsilon)})\sqrt{\mathcal{H}(x^{K+1},x^{K})}}{\alpha\sqrt{2\beta(1-\varepsilon)}}.
	\end{equation*}
	Since $\psi$ is a concave function,  one has for $k\ge K+1$,
	\begin{equation*}
		\|x_{\slg_{i}}^{k}\|_p\geq (\psi^{\prime})^{-1}\left(\frac{ \sqrt{n}(2\|A^\top A\|_2+\beta(4+\varepsilon)+2\|A \|_2\sqrt{\beta(1-\varepsilon)})\sqrt{\mathcal{H}(x^{K+1},x^{K})}}{\alpha\sqrt{2\beta(1-\varepsilon)}}\right).
	\end{equation*}
	
	%
	%
	%
	%
	Next, we consider the case that $p=1$.  \eqref{eq_optcon3}  implies  for any $j\in s(x^{k+1}_{\slg_{i}})$ with $\slg_{i} \in \mathcal{S}^{k}, k\geq K+1$,
	\begin{equation*}
		\begin{split}
			\alpha \psi^{\prime} (\|x_{\slg_{i}}^{k}\|_1)
			\leq&\|A^\top(Az^{k}-b)\|_1 + \beta\|x^{k+1}-z^{k})\|_1+\|\lambda^{k+1}_{{\mathcal{S}}^{k}}\|_1,\\
		\end{split}
	\end{equation*}
	which is exactly a similar inequality as the second inequality in  \eqref{eq:L4-eqqq}. We can therefore derive  for $k\ge K+1$,
	\begin{equation*}
		\|x_{\slg_{i}}^{k}\|_1\geq(\psi^{\prime})^{-1}\left(\frac{ \sqrt{n}(2\|A^\top A\|_2+\beta(4+\varepsilon)+2\|A \|_2\sqrt{\beta(1-\varepsilon)})\sqrt{\mathcal{H}(x^{K+1},x^{K})}}{\alpha\sqrt{2\beta(1-\varepsilon)}}\right).
	\end{equation*}
	This completes the proof.
\end{proof}

Lemma~\ref{lem-bound-theory} and Remark~\ref{property_psi}(i) indicate that  there exists $L_{\underline {c}} > 0$ such that for any $ \slg_i \in \mathcal{S}^{k},\;k \geq K+1$,

\begin{equation}\label{eq:grad-Lip-cond-K}
	\abs{ \psi^{\prime}(\|x_{\slg_i}^{k+1}\|_p) - \psi^{\prime}(\|x_{\slg_i}^{k}\|_p) }
	\leq L_{\underline {c}}\abs{\|x_{\slg_i}^{k+1}\|_p - \|x_{\slg_i}^{k}\|_p}
	\leq L_{\underline {c}}\|x_{\slg_i}^{k+1}-x_{\slg_i}^{k}\|_p.
\end{equation}
\begin{lemma}\label{lem-relative-error-condition}
	For the extrapolation parameters $\set{t_{k}}\subset[0,{\bar t})$, 
	let $\set{x^{k}}$ be a sequence generated by FITS$^3$. Then for each $k\geq K+1$, there exist $\nu^{k+1}  \in \partial \calE(x^{k+1})$ such that
	\begin{equation}\label{eq:relative-error-condition}
		\| \nu^{k+1} \|_{2}
		\leq  \Gamma_1\|x^{k+1} - x^{k} \|_{2}+\Gamma_2\|x^{k} - x^{k-1} \|_{2},
	\end{equation}
	and $\gamma^{k+1}\in \partial\mathcal{H}(x^{k+1},x^{k})$ such that
	\begin{equation}
		\|\gamma^{k+1}\|_2\leq \Gamma_3 \|x^{k+1} - x^{k} \|_{2}+\Gamma_2\|x^{k} - x^{k-1} \|_{2},
	\end{equation}	
	with
	$\Gamma_1=\max\left\{(\|A^{\top}A\|_2+\beta+\alpha (\widetilde{C}_{p-1})^{p-1}L_{\underline {c}}+\frac{\beta\varepsilon}2)\sqrt{n}, (\|A^{\top}A\|_2+\beta+\frac{\beta\varepsilon}2+n\alpha L_{\underline {c}}) \sqrt{n}\right\}$,
$\Gamma_2=(\|A^{\top}A\|_2+\beta)\sqrt{n},\ \Gamma_3=\Gamma_1+\sqrt{2}\beta(1-\varepsilon), 
	$
	and a constant $\widetilde{C}_{p-1}$ defined in \eqref{eq_ineq3}.
\end{lemma}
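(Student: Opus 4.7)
The plan is to construct an explicit subgradient $\nu^{k+1}\in\partial\calE(x^{k+1})$ from the inexact optimality system \eqref{eq_optcon2}--\eqref{eq_optcon4} and then read off $\gamma^{k+1}\in\partial\mathcal{H}(x^{k+1},x^k)$ from the definition of $\mathcal{H}$ in \eqref{eq_ht}. By the finite convergence property \eqref{eq_decrease_mont}, for $k\geq K+1$ the support has stabilized: $\tilde{\mathcal{S}}^k=\mathcal{S}^k$, $\tilde{x}^k=x^k$, and for every $\slg_i\in\mathcal{S}^k$ one has $x^{k+1}_{\slg_i}\neq 0$, so that the subdifferential element $\eta_{\slg_i}\in\partial\|\cdot\|_p(x^{k+1}_{\slg_i})$ implicitly appearing in the optimality conditions is well-defined (a single vector for $p>1$, a sign-type vector for $p=1$). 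I will set $\nu^{k+1}_{\slg_i}:=(A^\top(Ax^{k+1}-b))_{\slg_i}+\alpha\psi^{\prime}(\|x^{k+1}_{\slg_i}\|_p)\eta_{\slg_i}$ for $\slg_i\in\mathcal{S}^k$, which is an element of $\partial_{x_{\slg_i}}\calE(x^{k+1})$ via Remark \ref{property_psi}(ii); for $\slg_i\in\mathcal{G}\setminus\mathcal{S}^k$, $x^{k+1}_{\slg_i}=0$ and $\partial\psi(\|\cdot\|_p)$ at $0$ is all of $\mathbb{R}^{\#\slg_i}$, so I choose the subgradient that makes $\nu^{k+1}_{\slg_i}=0$.

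Substituting the optimality relation for $\slg_i\in\mathcal{S}^k$ gives the decomposition
$\nu^{k+1}_{\slg_i}=\lambda^{k+1}_{\slg_i}+(A^\top A(x^{k+1}-z^k))_{\slg_i}-\beta(x^{k+1}_{\slg_i}-z^k_{\slg_i})+\alpha[\psi^{\prime}(\|x^{k+1}_{\slg_i}\|_p)-\psi^{\prime}(\|x^k_{\slg_i}\|_p)]\eta_{\slg_i}$.
I will then apply the triangle inequality to $\|\nu^{k+1}\|_2$ and bound the four pieces. (i) By \eqref{eq_inexact_condition2} together with $x^{k+1}_j=x^k_j=0$ for $j\notin\mathcal{S}^k$, one gets $\|\lambda^{k+1}\|_2\leq\frac{\beta\varepsilon}{2}\|x^{k+1}-x^k\|_2$. (ii) The linear and quadratic terms contribute $(\|A^\top A\|_2+\beta)\|x^{k+1}-z^k\|_2$; the identity $x^{k+1}-z^k=(x^{k+1}-x^k)-t_k(x^k-x^{k-1})$ together with $t_k<1$ splits this into contributions on both $\|x^{k+1}-x^k\|_2$ and $\|x^k-x^{k-1}\|_2$. (iii) For the $\psi^{\prime}$-difference term, \eqref{eq:grad-Lip-cond-K} controls $|\psi^{\prime}(\|x^{k+1}_{\slg_i}\|_p)-\psi^{\prime}(\|x^k_{\slg_i}\|_p)|$ by $L_{\underline{c}}\|x^{k+1}_{\slg_i}-x^k_{\slg_i}\|_p$, while $\|\eta_{\slg_i}\|_2\leq\widetilde{C}_{p-1}^{p-1}$ for $p>1$ (using $\|\eta_{\slg_i}\|_2\leq\|\eta_{\slg_i}\|_1$ and Lemmas \ref{lemma_inequa}--\ref{lemma_ineq2}) or $\|\eta_{\slg_i}\|_2\leq\sqrt{\#\slg_i}$ for $p=1$; applying $\|x^{k+1}_{\slg_i}-x^k_{\slg_i}\|_p\leq\sqrt{\#\slg_i}\,\|x^{k+1}_{\slg_i}-x^k_{\slg_i}\|_2$ (again from Lemmas \ref{lemma_inequa}--\ref{lemma_ineq2}) and $\sum_{\slg_i}\#\slg_i=n$ collapses the sum into the $p$-dependent factor $\alpha L_{\underline{c}}\widetilde{C}_{p-1}^{p-1}\sqrt{n}$ (for $p>1$) or $n\alpha L_{\underline{c}}\sqrt{n}$ (for $p=1$). (iv) Inflating the remaining coefficients by $\sqrt{n}$ to uniformize the bound yields precisely $\|\nu^{k+1}\|_2\leq\Gamma_1\|x^{k+1}-x^k\|_2+\Gamma_2\|x^k-x^{k-1}\|_2$.

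For the second inequality, the chain rule on $\mathcal{H}(x,u)=\calE(x)+\frac{\beta(1-\varepsilon)}{2}\|x-u\|_2^2$ gives $\partial\mathcal{H}(x^{k+1},x^k)\ni(\nu^{k+1}+\beta(1-\varepsilon)(x^{k+1}-x^k),\;-\beta(1-\varepsilon)(x^{k+1}-x^k))=:\gamma^{k+1}$. Then $\|\gamma^{k+1}\|_2\leq\|\nu^{k+1}\|_2+\sqrt{2}\beta(1-\varepsilon)\|x^{k+1}-x^k\|_2$, and substituting the first bound gives the claim with $\Gamma_3=\Gamma_1+\sqrt{2}\beta(1-\varepsilon)$. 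The main technical obstacle is step (iii): cleanly coupling the per-group Lipschitz constant $L_{\underline{c}}$ (which is available only because of the uniform lower bound $\underline{c}$ from Lemma \ref{lem-bound-theory}) with the $p$-dependent norm equivalences so that the group-wise errors $\|x^{k+1}_{\slg_i}-x^k_{\slg_i}\|_p$ collapse into a clean multiple of the global quantity $\|x^{k+1}-x^k\|_2$, and treating the $p=1$ branch separately so that the extra factor $n$ naturally appears in the second alternative defining $\Gamma_1$.
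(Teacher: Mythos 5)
Your proposal is correct and follows essentially the same route as the paper: the same explicit subgradient construction matched against the optimality relations \eqref{eq_optcon2}--\eqref{eq_optcon4}, the same decomposition isolating $\lambda^{k+1}_{\slg_i}$, the $A^\top A$ and $\beta$ terms and the $\psi^{\prime}$-difference, the same use of \eqref{eq_inexact_condition2}, \eqref{eq:grad-Lip-cond-K} and Lemmas \ref{lemma_inequa}--\ref{lemma_ineq2} with the split of $x^{k+1}-z^k$, and the identical $\gamma^{k+1}$ from \eqref{eq_rxry}. The only (cosmetic) divergence is at $p=1$ on the components $j\notin s(x^{k+1}_{\slg_i})$, where you implicitly write the interval element as $\psi^{\prime}(\|x^{k}_{\slg_i}\|_1)\eta_j$ with $\eta_j\in[-1,1]$ and reuse the same $\eta_j$ at $x^{k+1}$, while the paper explicitly projects its $\eta_j$ onto the shrunken interval to define $\xi_j$ --- both choices yield the same $\alpha L_{\underline {c}}$ bound and the same constants $\Gamma_1,\Gamma_2,\Gamma_3$.
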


\begin{proof}
	We will discuss  $p>1$ and $p=1$ separately.
	
	(1) The case of $p>1$.	
	Let $k\geq K+1$ and define $\nu^{k+1}$ by
	\begin{equation}\label{eq_subdiffff}
		\small
		\left\{
		\begin{aligned}
			\nu_{j}^{k+1} &=(A^\top(Ax^{k+1}-b))_{j}
			+ \alpha \psi^{\prime} (\|x_{\slg_i}^{k+1}\|_p)\|x_{\slg_{i}}^{k+1}\|_p^{1-p}|x_{j}^{k+1}|^{p-1}\sgn(x_{j}^{k+1}), \;\forall j\in \slg_i\, , \,\slg_i  \in \mathcal{S}^{k}; \\
			\nu_{j}^{k+1} &= 0, \quad\forall j\in \slg_i\, , \,\slg_i \in \mathcal{G} \setminus \mathcal{S}^{k}.
		\end{aligned}\right.
	\end{equation}
	According to the subdifferential in \eqref{eq_subdiff}, one has $\nu^{k+1}\in\partial\calE(x^{k+1})$. Recall \eqref{eq_optcon2} that for any $j\in \slg_i$ with $ \slg_i\in \mathcal{S}^{k}, k\geq K$
	\begin{equation}\label{eq:L4-subdiff1}
		\begin{split}
			\lambda^{k+1}_j =(A^\top(Az^{k}-b))_{j}+ \beta(x^{k+1}-z^{k})_{j}
			+ \alpha \psi^{\prime} (\|x_{\slg_i}^{k}\|_p)\|x_{\slg_{i}}^{k+1}\|_p^{1-p}|x_{j}^{k+1}|^{p-1}\sgn(x_{j}^{k+1}).
		\end{split}
	\end{equation}
	Combining \eqref{eq_subdiffff} and \eqref{eq:L4-subdiff1}, we obtain that for any $j\in\slg_i$ with $ \slg_i  \in \mathcal{S}^{k}, k\geq K+1$
	\begin{equation}\label{eq:L4-subdiff2}
		\small
		\begin{split}
			\nu_{j}^{k+1} =&(A^\top A(x^{k+1}-z^{k}))_{j}
			-\beta(x^{k+1}-z^{k})_{j}+\lambda^{k+1}_j\\
			&+\alpha \|x_{\slg_{i}}^{k+1}\|_p^{1-p}|x_{j}^{k+1}|^{p-1}\sgn(x_{j}^{k+1})\left(\psi^{\prime} (\|x_{\slg_i}^{k+1}\|_p)-\psi^{\prime}(\|x_{\slg_i}^{k}\|_p)\right).
		\end{split}
	\end{equation}
	From \eqref{eq:L4-subdiff2} and using \eqref{eq:grad-Lip-cond-K}, we get
	\begin{equation}\label{eq_nu}
		\small
		\begin{split}
			|\nu_{j}^{k+1}|\leq|(A^\top A(x^{k+1}-z^{k}))_{j}|+\beta|(x^{k+1}-z^{k})_{j}|+|\lambda^{k+1}_j|
			+\alpha L_{\underline {c}} \|x_{\slg_{i}}^{k+1}\|_p^{1-p}|x_{j}^{k+1}|^{p-1}\|x_{\slg_i}^{k+1}-x_{\slg_i}^{k}\|_p.
		\end{split}
	\end{equation}
	Adding up all $j\in \slg_i$ for $\slg_i  \in \mathcal{S}^{k}, k\geq K+1$ in \eqref{eq_nu} yields
	\begin{equation}\label{eq_subdiffl1}
		\begin{split}
		\|\nu_{\slg_i}^{k+1}\|_1\leq&\|(A^\top A(x^{k+1}-z^{k}))_{\slg_i}\|_1+\beta\|(x^{k+1}-z^{k})_{\slg_i}\|_1+\|\lambda^{k+1}_{\slg_i}\|_1\\
		&+\alpha L_{\underline{c}}\|x_{\slg_{i}}^{k+1}\|_p^{1-p}\|x_{\slg_i}^{k+1}\|_{p-1}^{p-1}
		\|x_{\slg_i}^{k+1}-x_{\slg_i}^{k}\|_p.
		\end{split}
	\end{equation}
	By Lemma \ref{lemma_inequa} and Lemma \ref{lemma_ineq2}, one has
	\begin{equation}\label{eq_Cp-1}
		\begin{split}
		\alpha L_{\underline {c}}\|x_{\slg_{i}}^{k+1}\|_p^{1-p}\|x_{\slg_i}^{k+1}\|_{p-1}^{p-1}\|x_{\slg_i}^{k+1}-x_{\slg_i}^{k}\|_p
		&\leq \alpha  (\widetilde{C}_{p-1})^{p-1}L_{\underline {c}}\|x_{\slg_i}^{k+1}-x_{\slg_i}^{k}\|_p\\
		&\leq\alpha  (\widetilde{C}_{p-1})^{p-1}L_{\underline {c}}\|x_{\slg_i}^{k+1}-x_{\slg_i}^{k}\|_1,
	\end{split}
\end{equation}
	where $\widetilde{C}_{p-1}$ is a constant.
	Combining \eqref{eq_subdiffl1} and \eqref{eq_Cp-1} and using Lemma \ref{lemma_ineq2}, we get
	\begin{equation}
		\nonumber
		\begin{split}
			&\|\nu^{k+1}\|_1\\ 
			& \leq  \sqrt{n}\left(\|A^\top A(x^{k+1}-z^{k})\|_2+ \beta\|x^{k+1}-z^{k}\|_2+\|\lambda^{k+1}_{{\mathcal{S}}^{k}}\|_2+\alpha   (\widetilde{C}_{p-1})^{p-1}L_{\underline {c}}\|x^{k+1}-x^{k}\|_2\right).
		\end{split}
	\end{equation}	
	Recalling  \eqref{eq_inexact_condition2} and the definition of $z^k$ in \eqref{eq_zkk}, one has for $k\ge K+1$ that
	\begin{equation}\label{eq_subdiffl3}
		\nonumber
		\begin{split}
			\|\nu^{k+1}\|_1\leq&
			\left(\|A^{\top}A\|_2+\beta+\alpha (\widetilde{C}_{p-1})^{p-1} L_{\underline {c}}+\frac{\beta\varepsilon}2\right) \sqrt{n}\|x^{k+1}-x^{k}\|_2\\
		&	+(\|A^{\top}A\|_2+\beta) \sqrt{n}\|x^{k}-x^{k-1}\|_{2}.
		\end{split}
	\end{equation}
	Using the inequality in Lemma \ref{lemma_inequa}, we get
	\begin{equation}\label{eq_impt}
		\begin{split}
		\|\nu^{k+1}\|_2\leq& \left(\|A^{\top}A\|_2+\beta+\alpha (\widetilde{C}_{p-1})^{p-1}L_{\underline {c}}+\frac{\beta\varepsilon}2\right) \sqrt{n}\|x^{k+1}-x^{k}\|_2\\
	&	+(\|A^{\top}A\|_2+\beta) \sqrt{n}\|x^{k}-x^{k-1}\|_{2}.
		\end{split}
	\end{equation}

	Again for $k\ge K+1$, we	define $\gamma^{k+1}:=(\gamma^{k+1}_x,\gamma^{k+1}_u)$ by
	\begin{equation}\label{eq_rxry}
		\left\{
		\begin{aligned}
			\gamma^{k+1}_x & =	\nu^{k+1}+\beta(1-\varepsilon)(x^{k+1}-x^{k});\\
			\gamma^{k+1}_u & =-\beta(1-\varepsilon)(x^{k+1}-x^{k}).
		\end{aligned}
		\right.
	\end{equation}
	By the definition of $\mathcal{H}(x,u)$ in \eqref{eq_ht}, we have $\gamma^{k+1}\in\partial\mathcal{H}(x^{k+1},x^{k})$. Meanwhile, it holds that
	\begin{equation*}
		\|\gamma^{k+1}\|_2 \leq \|\nu^{k+1}\|_2+\sqrt{2}\beta(1-\varepsilon)\|x^{k+1}-x^{k}\|_2.
	\end{equation*}
	Therefore, using \eqref{eq_impt}, one has
	\begin{equation}\label{eq_gamma}
		\begin{aligned}
			\|\gamma^{k+1}\|_2\leq &\left(\left(\|A^{\top}A\|_2+\beta+\alpha (\widetilde{C}_{p-1})^{p-1}L_{\underline {c}}+\frac{\beta\epsilon}2\right)\sqrt{n}+\sqrt{2}\beta(1-\varepsilon)\right)\|x^{k+1}-x^{k}\|_2\\
			&+(\|A^{\top}A\|_2+\beta)\sqrt{n}\|x^{k}-x^{k-1}\|_2.
		\end{aligned}
	\end{equation}

	(2) The case of $p=1$. Let $k\ge K+1$ and define $\nu^{k+1}$ as
	\begin{equation}\label{eq_sudiff_p1}
		\nu_{j}^{k+1}=\left\{
		\begin{aligned}
			&(A^\top(Ax^{k+1}-b))_{j}
			+ \alpha \psi^{\prime} (\|x_{\slg_i}^{k+1}\|_1)\sgn(x_{j}^{k+1}), \quad\forall j\in s(x^{k+1}_{\slg_i}),\;  \slg_i  \in \mathcal{S}^{k}; \\
			& (A^\top(Ax^{k+1}-b))_{j}+\alpha \xi_{j},\;\;\,\;\qquad\quad\quad\qquad\qquad\quad\forall j\notin s(x^{k+1}_{\slg_i}),\;  \slg_i  \in \mathcal{S}^{k}; \\
			& 0, \qquad\qquad\qquad\qquad\qquad\qquad\qquad\qquad\qquad\quad\;\;\;\;\forall j\in \slg_i, \;\slg_i \in \mathcal{G} \setminus \mathcal{S}^{k}.
		\end{aligned}\right.
	\end{equation}
	where $\xi_{j}\in[-\psi^{\prime} (\|x_{\slg_i}^{k+1}\|_1),\psi^{\prime} (\|x_{\slg_i}^{k+1}\|_1)]$ will be selected later. By the subdifferential in \eqref{eq_subdiff}, one has $\nu^{k+1}\in\partial\calE(x^{k+1})$.  Recall  \eqref{eq_optcon3} and \eqref{eq_optcon4} that for any $j\in s(x^{k+1}_{\slg_i})$ with $ \slg_i  \in \mathcal{S}^{k}, k\geq K$,
	\begin{equation}\label{eq_optp1}
		\begin{split}
			\lambda^{k+1}_j=(A^\top(Az^{k}-b))_{j} + \beta(x^{k+1}-z^{k})_{j}
			+ \alpha \psi^{\prime} (\|x_{\slg_{i}}^{k}\|_1)\sgn(x_{j}^{k+1}),
		\end{split}
	\end{equation}
	and  that for any $j\not\in s(x^{k+1}_{\slg_i})$ with $ \slg_i  \in \mathcal{S}^{k}, k\geq K$,
	\begin{equation}
		\label{eq_optp1ns}
		\begin{split}
			\lambda^{k+1}_j\in(A^\top(Az^{k}-b))_{j} + \beta(x^{k+1}-z^{k})_{j}
			+ \alpha [-\psi^{\prime} (\|x_{\slg_{i}}^{k}\|_1),\psi^{\prime} (\|x_{\slg_{i}}^{k}\|_1)].
		\end{split}
	\end{equation}
	
	On one hand, combining \eqref{eq_sudiff_p1} and \eqref{eq_optp1}, we get for any $j\in s(x^{k+1}_{\slg_i})$ with $ \slg_i  \in \mathcal{S}^{k}, k\geq K+1$,
	\begin{equation*}\label{eq:L4-subdiff21}
		\begin{split}
			\nu_{j}^{k+1} =&(A^\top A(x^{k+1}-z^{k}))_{j}
			-\beta(x^{k+1}-z^{k})_{j}+\lambda^{k+1}_j\\
			&+\alpha \sgn(x_{j}^{k+1})\left(\psi^{\prime} (\|x_{\slg_i}^{k+1}\|_1)-\psi^{\prime}(\|x_{\slg_i}^{k}\|_1)\right).
		\end{split}
	\end{equation*}
	Using \eqref{eq:grad-Lip-cond-K}, it then follows that for any $j\in s(x^{k+1}_{\slg_i})$ with $ \slg_i  \in \mathcal{S}^{k}, k\geq K+1$,
	\begin{equation}\label{eq_vp0}
		\begin{split}
			|\nu_{j}^{k+1}|&\leq  |(A^\top A(x^{k+1}-z^{k}))_{j}|
			+\beta|(x^{k+1}-z^{k})_{j}|+|\lambda^{k+1}_j|+\alpha L_{\underline {c}} \|x_{\slg_i}^{k+1}-x_{\slg_i}^{k}\|_1.
		\end{split}
	\end{equation}

	On the other hand, by \eqref{eq_optp1ns}, there exists an $\eta_{j}\in [-\psi^{\prime} (\|x_{\slg_{i}}^{k}\|_1),\psi^{\prime} (\|x_{\slg_{i}}^{k}\|_1)]$ such that
	\begin{equation}\label{eq_q1opt}
		\begin{split}
			\nonumber
			\lambda^{k+1}_j=(A^\top(Az^{k}-b))_{j} + \beta(x^{k+1}-z^{k})_{j}
			+ \alpha \eta_{j},\;j\not\in s(x^{k+1}_{\slg_i}).
		\end{split}
	\end{equation}
	Then, for any $j\notin s(x^{k+1}_{\slg_i})$ with $ \slg_i  \in \mathcal{S}^{k}, k\geq K+1$, $\nu_{j}^{k+1}$ in \eqref{eq_sudiff_p1} could be written as
	\begin{equation}\label{eq_nujk1}
		\begin{split}
			\nu_{j}^{k+1} =(A^\top A(x^{k+1}-z^{k}))_{j}
			-\beta(x^{k+1}-z^{k})_{j}+\lambda^{k+1}_j+\alpha (\xi_{j}-\eta_{j}).
		\end{split}
	\end{equation}
	In order to estimate the value of $|\nu_{j}^{k+1}|$ for any $j\notin s(x^{k+1}_{\slg_i})$ with $ \slg_i  \in \mathcal{S}^{k}, k\geq K$, we use the method in \cite{xue2019efficient} to choose
	$\xi_j$ in \eqref{eq_sudiff_p1}.
	If $\psi^{\prime} (\|x_{\slg_{i}}^{k}\|_1)\leq \psi^{\prime} (\|x_{\slg_{i}}^{k+1}\|_1)$, we set $\xi_{j}=\eta_{j}$. If $\psi^{\prime} (\|x_{\slg_{i}}^{k}\|_1)> \psi^{\prime} (\|x_{\slg_{i}}^{k+1}\|_1)$, we set
	\begin{equation*}
		\xi_{j}=\left\{
		\begin{aligned}
			\eta_{j},& \quad \eta_{j}\in \left[-\psi^{\prime} (\|x_{\slg_{i}}^{k+1}\|_1),\psi^{\prime} (\|x_{\slg_{i}}^{k+1}\|_1)\right];\\
			\psi^{\prime} (\|x_{\slg_{i}}^{k+1}\|_1),&\quad \eta_{j}\in \left(\psi^{\prime} (\|x_{\slg_{i}}^{k+1}\|_1),\psi^{\prime} (\|x_{\slg_{i}}^{k}\|_1)\right];\\
			-\psi^{\prime} (\|x_{\slg_{i}}^{k+1}\|_1), &\quad \eta_{j}\in \left[-\psi^{\prime} (\|x_{\slg_{i}}^{k}\|_1),-\psi^{\prime} (\|x_{\slg_{i}}^{k+1}\|_1)\right).
		\end{aligned}\right.
	\end{equation*}
	By the choice of $\xi_{j}$ and \eqref{eq_nujk1}, the following holds for  $j\notin s(x^{k+1}_{\slg_i})$ with $ \slg_i  \in \mathcal{S}^{k}, k\geq K+1$,
	\begin{equation}\label{eq_vp1}
		\begin{split}
			|\nu_{j}^{k+1}|&\leq |(A^\top A(x^{k+1}-z^{k}))_{j}|
			+\beta|(x^{k+1}-z^{k})_{j}|+|\lambda^{k+1}_j|+\alpha \left|\psi^{\prime} (\|x_{\slg_i}^{k+1}\|_1)-\psi^{\prime}(\|x_{\slg_i}^{k}\|_1)\right|\\
			&\leq |(A^\top A(x^{k+1}-z^{k}))_{j}|
			+\beta|(x^{k+1}-z^{k})_{j}|+|\lambda^{k+1}_j|+\alpha L_{\underline {c}} \|x_{\slg_i}^{k+1}-x_{\slg_i}^{k}\|_1.
		\end{split}
	\end{equation}
	
	Adding up all $j\in \slg_i$, for $ \slg_i  \in \mathcal{S}^{k}, k\geq K+1$ in \eqref{eq_vp0} and \eqref{eq_vp1}, we obtain
	\begin{equation}\label{eq_s}
		\nonumber
		\begin{split}
			\|\nu_{\slg_i}^{k+1}\|_1\leq\|(A^\top A(x^{k+1}-z^{k}))_{\slg_i}\|_1
			+\beta\|(x^{k+1}-z^{k})_{\slg_i}\|_1+\|\lambda^{k+1}_{\slg_i}\|_1+n\alpha L_{\underline {c}} \|x_{\slg_i}^{k+1}-x_{\slg_i}^{k}\|_1.
		\end{split}
	\end{equation}
	Recalling the construction of $\nu^{k+1}$ in \eqref{eq_sudiff_p1}, one has $\nu^{k+1}_j=0$, for any $j\in \slg_i$ with $ \slg_i \in \mathcal{G} \setminus \mathcal{S}^{k}, k\geq K+1$. Therefore, by Lemma \ref{lemma_ineq2} and \eqref{eq_inexact_condition2},  we can derive that
	\begin{equation*}
		\begin{split}
			\|\nu^{k+1}\|_1
			&\leq (\|A^{\top}A\|_2+\beta)\sqrt{n}\|x^{k+1}-z^{k}\|_2+(\frac{\beta\varepsilon}2+n\alpha L_{\underline {c}})\sqrt n \|x^{k+1}-x^{k}\|_2.
		\end{split}
	\end{equation*}
	Applying $\|\nu^{k+1}\|_2\leq\|\nu^{k+1}\|_1$, we get
	\begin{equation}\label{eq_nusubdiffp=1}
		\begin{split}
			\|\nu^{k+1}\|_2
			&\leq 
			(\|A^{\top}A\|_2+\beta+\frac{\beta\varepsilon}2+n\alpha L_{\underline {c}}) \sqrt{n}\|x^{k+1}-x^{k}\|_2+(\|A^{\top}A\|_2+\beta) \sqrt{n}\|x^{k}-x^{k-1}\|_{2}.
		\end{split}
	\end{equation}
	
	Define $\gamma^{k+1}\in\partial\mathcal{H}(x^{k+1},x^{k})$ as  in \eqref{eq_rxry}. By some simple calculations as in \eqref{eq_gamma}, we get
	\begin{equation}
		\label{eq_gammasubdiffp=1}
		\begin{split}
			\|\gamma^{k+1}\|_2 \leq&\left((\|A^{\top}A\|_2+\beta+\frac{\beta\varepsilon}2+n\alpha L_{\underline {c}})\sqrt{n}+\sqrt{2}\beta(1-\varepsilon)\right)\|x^{k+1}-x^{k}\|_2\\
			&+(\|A^{\top}A\|_2+\beta)\sqrt{n}\|x^{k}-x^{k-1}\|_2.
		\end{split}
	\end{equation}
	Summarizing \eqref{eq_impt}, \eqref{eq_gamma} \eqref{eq_nusubdiffp=1} and \eqref{eq_gammasubdiffp=1} completes the proof.
\end{proof}

\begin{lemma}\label{lemma_stationary_point}
	For any $\beta>\|A^{\top}A\|_2$ and the extrapolation parameters $\set{t_{k}}\subset[0,{\bar t})$ satisfying $\sup_{k} t_k<{\bar t}$, 
	let $\set{x^{k}}$ be a sequence generated by FITS$^3$. Then, any cluster point of $\set{x^{k}}$ is a stationary point of $\calE$.
\end{lemma}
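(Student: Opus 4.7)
The plan is to combine the error bound from Lemma \ref{lem-relative-error-condition}, the step-size decay from Lemma \ref{lem-bound-sufficient-decrease}(ii), the finite support stabilization in \eqref{eq_decrease_mont}, and the lower bound from Lemma \ref{lem-bound-theory}. Let $x^*$ be any cluster point of $\{x^k\}$ and pick a subsequence $x^{k_j}\to x^*$. Since $\|x^{k_j+1}-x^{k_j}\|_2\to 0$ and $\|x^{k_j}-x^{k_j-1}\|_2\to 0$ by Lemma \ref{lem-bound-sufficient-decrease}(ii), I also have $x^{k_j+1}\to x^*$ and $x^{k_j-1}\to x^*$. Lemma \ref{lem-relative-error-condition} then supplies an explicit $\nu^{k_j+1}\in\partial\calE(x^{k_j+1})$ with $\|\nu^{k_j+1}\|_2\to 0$. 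The goal is to pass to the limit in this inclusion to conclude $0\in\partial\calE(x^*)$.

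The passage to the limit is carried out groupwise via the subdifferential formula in Remark \ref{property_psi}(ii). For $j$ large, \eqref{eq_decrease_mont} gives $\mathcal{S}^{k_j}\equiv \mathcal{S}^K$, and Lemma \ref{lem-bound-theory}(ii) ensures $\|x^{k_j+1}_{\slg_i}\|_p\ge \underline{c}>0$ for every $\slg_i\in \mathcal{S}^K$, hence $\|x^*_{\slg_i}\|_p\ge \underline{c}$ by continuity of the $p$-norm; meanwhile $x^{k_j+1}_{\slg_i}=0$ for $\slg_i\in \mathcal{G}\setminus \mathcal{S}^K$, so $x^*_{\slg_i}=0$ there as well. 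For $\slg_i\in \mathcal{S}^K$ the group norm stays bounded away from zero, so $\psi^{\prime}(\|x^{k_j+1}_{\slg_i}\|_p)\to \psi^{\prime}(\|x^*_{\slg_i}\|_p)$ by the local Lipschitz continuity in Remark \ref{property_psi}(i), and the explicit coordinates of $\nu^{k_j+1}$ given by \eqref{eq_subdiffff} (for $p>1$) or \eqref{eq_sudiff_p1} (for $p=1$) converge to a legitimate element of the subdifferential of $\calE$ at $x^*$ in that block, which must equal $0$ since $\nu^{k_j+1}\to 0$. For $\slg_i\in \mathcal{G}\setminus \mathcal{S}^K$, the formula \eqref{eq_subdiff} shows $\partial \psi(\|x^*_{\slg_i}\|_p)$ equals the whole ambient space, so $-\tfrac{1}{\alpha}(A^\top(Ax^*-b))_{\slg_i}$ trivially belongs to it and the $\slg_i$-block of $0\in\partial\calE(x^*)$ holds automatically.

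The delicate step will be handling the case $p=1$ with $\slg_i\in \mathcal{S}^K$ and $x^*_j=0$, because the sign function is discontinuous and the auxiliary selection $\xi_j$ in \eqref{eq_sudiff_p1} is multivalued. I would resolve this by extracting a further subsequence so that $\sgn(x^{k_j+1}_j)$ (when $x^{k_j+1}_j\neq 0$) and $\xi_j^{k_j+1}$ (when $x^{k_j+1}_j=0$) each converge; boundedness forces the limits into $[-1,1]$ and into $[-\psi^{\prime}(\|x^*_{\slg_i}\|_1),\psi^{\prime}(\|x^*_{\slg_i}\|_1)]$ respectively, which matches exactly the $j$-th component of $\partial \psi(\|x^*_{\slg_i}\|_1)$ when $x^*_j=0$ in \eqref{eq_subdiff}. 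For $p>1$ the analogous difficulty is much milder: since $p-1>0$ the product $|x^{k_j+1}_j|^{p-1}\sgn(x^{k_j+1}_j)$ tends to $0$ whenever $x^*_j=0$, matching the convention $|0|^{p-1}\sgn(0)=0$. Collecting the blockwise identifications yields $0\in\partial\calE(x^*)$, which is the definition of a stationary point and completes the proof.
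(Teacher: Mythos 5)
Your proof is correct, and its skeleton matches the paper's: boundedness from Lemma \ref{lem-bound-sufficient-decrease}(ii), a vanishing subgradient $\nu^{k_j+1}\in\partial\calE(x^{k_j+1})$ from Lemma \ref{lem-relative-error-condition} together with $\|x^{k+1}-x^k\|_2\to 0$, and then a passage to the limit. Where you diverge is in that last step. The paper dispatches it in one line by invoking the closedness of the graph of the limiting subdifferential (outer semicontinuity), using only the continuity of $\calE$ to guarantee $\calE(x^{k_l})\to\calE(\hat x)$; it never needs the lower bound of Lemma \ref{lem-bound-theory} or the support stabilization \eqref{eq_decrease_mont} at this stage. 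You instead verify the limit inclusion by hand, blockwise, which is why you must bring in \eqref{eq_decrease_mont} to freeze the support, Lemma \ref{lem-bound-theory}(ii) to keep $\|x^*_{\slg_i}\|_p\ge\underline{c}>0$ on active groups so that $\psi'$ behaves continuously there, and a further subsequence extraction to tame $\sgn(x_j^{k+1})$ and the selections $\xi_j$ when $p=1$. The trade-off is clear: the paper's argument is shorter but leans on an abstract variational-analysis fact about the limiting subdifferential (and on the convention \eqref{eq_subdiff} being consistent with it), whereas yours is longer but entirely self-contained and makes explicit why the inclusion survives the limit in each regime ($p>1$, $p=1$ with $x^*_j\neq 0$, $p=1$ with $x^*_j=0$, and inactive groups). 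Both are sound; your handling of the discontinuous sign terms via bounded subsequences landing in the correct interval is exactly the right fix for the only genuinely delicate case.
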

\begin{proof}
	By Lemma \ref{lem-bound-sufficient-decrease}(ii), the sequence $\set{x^k}$ is bounded and thus has a cluster point. Take an arbitrary cluster point of $\set{x^k}$
	denoted as $\hat{x}=\lim_{l\rightarrow \infty} x^{k_l}$.  Then, we have $\lim_{l\rightarrow\infty}\calE(x^{k_l})=\calE(\hat{x})$ due to the continuity of $\calE$. By Lemma \ref{lem-relative-error-condition}, one chooses a
	$\nu^{k_l}\in \partial \calE(x^{k_l})$. It holds that $\nu^{k_l}\rightarrow 0$ as $l\rightarrow \infty$ by the inequality \eqref{eq:relative-error-condition} and
	the fact that $\lim_{k \to \infty} \| x^{k} - x^{k-1} \|_{2} = 0$. The closedness of the graph of the limiting subdifferential operator guarantees that $0\in \partial \calE(\hat{x})$.
\end{proof}


Finally, we give our main convergence result of the FITS$^3$ algorithm.
\begin{theorem}\label{thm-global-convergence}
	Assume that $\calH(x,u)$ is a K\L ~function, $\beta>\|A^{\top}A\|_2$ and the extrapolation parameters $\set{t_{k}}\subset[0,{\bar t})$ satisfy $\sup_{k} t_k<{\bar t}$. 
	Let $\set{x^{k}}$ be a sequence generated by FITS$^3$. Then, the sequence $\set{x^{k}}$ converges globally to  a stationary point of $\calE$. Moreover, $\sum_{k=0}^{\infty} \| x^{k} - x^{k-1} \|_{2}< \infty$ holds.
\end{theorem}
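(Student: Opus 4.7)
The plan is to follow the now-standard K\L-based convergence framework (see, e.g., Attouch--Bolte--Svaiter and the subsequent literature cited in the paper), using the value function $\calH$ rather than $\calE$ itself so that the three ingredients ``sufficient decrease,'' ``subgradient bound,'' and ``K\L property'' line up. Lemma \ref{lem-bound-sufficient-decrease} already gives us the sufficient decrease in $\calH$ along iterates and tells us that the accumulation set $\Omega$ of $\set{x^k}$ is nonempty and bounded, that $\Upsilon=\set{(x,x):x\in\Omega}$ is the set of accumulation points of $\set{(x^k,x^{k-1})}$, and that $\calH\equiv\zeta$ on $\Upsilon$. Lemma \ref{lem-relative-error-condition} supplies the subgradient bound, and Lemma \ref{lemma_stationary_point} guarantees that every cluster point is stationary for $\calE$. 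Therefore, once global convergence of the whole sequence is established, identifying the limit as a stationary point of $\calE$ is automatic.

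The core argument is as follows. First I would dispose of the trivial case in which $\calH(x^{k_0},x^{k_0-1})=\zeta$ for some $k_0\geq K+1$: by the monotonicity in Lemma \ref{lem-bound-sufficient-decrease}(i) together with the strict positivity of the coefficient in \eqref{eq:lowbound}, this forces $x^k=x^{k-1}$ for all $k\geq k_0$, and the theorem is immediate. Otherwise, $\calH(x^k,x^{k-1})>\zeta$ for all $k\geq K+1$. Because $\Upsilon$ is nonempty, compact, and $\calH$ is constant on it, I would invoke the uniformized K\L property: there exist $\eta>0$, a neighborhood $U$ of $\Upsilon$, and a desingularizing concave function $\varphi\in C^0([0,\eta))\cap C^1((0,\eta))$ with $\varphi(0)=0$ and $\varphi'>0$, such that
\[
\varphi'\bigl(\calH(x,u)-\zeta\bigr)\cdot\dist\bigl(0,\partial\calH(x,u)\bigr)\geq 1
\]
for every $(x,u)\in U$ with $\zeta<\calH(x,u)<\zeta+\eta$. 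Since $(x^k,x^{k-1})\to\Upsilon$ and $\calH(x^k,x^{k-1})\downarrow\zeta$, there exists $k_1\geq K+1$ such that $(x^k,x^{k-1})\in U$ and $0<\calH(x^k,x^{k-1})-\zeta<\eta$ for all $k\geq k_1$.

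Now for $k\geq k_1$ I would combine the three ingredients in the usual way. Writing $\Delta_k:=\varphi(\calH(x^k,x^{k-1})-\zeta)-\varphi(\calH(x^{k+1},x^k)-\zeta)$ and using the concavity of $\varphi$ together with the K\L inequality at $(x^k,x^{k-1})$, one gets
\[
\Delta_k\cdot\bigl(\Gamma_3\|x^k-x^{k-1}\|_2+\Gamma_2\|x^{k-1}-x^{k-2}\|_2\bigr)\geq \calH(x^k,x^{k-1})-\calH(x^{k+1},x^k),
\]
where the subgradient bound from Lemma \ref{lem-relative-error-condition} has been used to estimate $\dist(0,\partial\calH(x^k,x^{k-1}))$. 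Substituting the sufficient decrease from Lemma \ref{lem-bound-sufficient-decrease}(i), and denoting by $C_0$ the positive constant on the right-hand side of \eqref{eq:lowbound}, I obtain
\[
\|x^k-x^{k-1}\|_2^2\leq \frac{1}{C_0}\Delta_k\cdot\bigl(\Gamma_3\|x^k-x^{k-1}\|_2+\Gamma_2\|x^{k-1}-x^{k-2}\|_2\bigr).
\]
Applying the Young-type inequality $\sqrt{ab}\leq \tfrac{a}{4}+b$ to the square root of this estimate yields a bound of the form
\[
\|x^k-x^{k-1}\|_2\leq \tfrac{1}{4}\|x^{k-1}-x^{k-2}\|_2+M\Delta_k
\]
for some constant $M>0$. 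Summing over $k\geq k_1$ and using that $\sum_{k\geq k_1}\Delta_k\leq \varphi(\calH(x^{k_1},x^{k_1-1})-\zeta)<\infty$ telescopes, we conclude $\sum_{k}\|x^k-x^{k-1}\|_2<\infty$. This implies that $\set{x^k}$ is Cauchy and hence converges to some $x^\ast$, which by Lemma \ref{lemma_stationary_point} is a stationary point of $\calE$.

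The main obstacle is the two-step nature of the subgradient bound \eqref{eq:relative-error-condition}, which mixes the consecutive increments $\|x^{k+1}-x^k\|_2$ and $\|x^k-x^{k-1}\|_2$; this is what forces the particular Young-type splitting above and makes the telescoping less transparent than in the standard one-step setting. Once that is handled, everything else is a routine application of the uniformized K\L machinery, since the sufficient decrease, subgradient bound, continuity of $\calH$, and constancy of $\calH$ on $\Upsilon$ have all been prepared by the preceding lemmas.
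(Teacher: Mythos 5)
Your argument is correct in substance, but it takes a genuinely different (and much longer) route than the paper. The paper's proof of Theorem \ref{thm-global-convergence} is essentially a citation: it verifies that Lemma \ref{lem-bound-sufficient-decrease}(i)(ii) gives the sufficient-decrease condition (H1), Lemma \ref{lem-relative-error-condition} gives the relative-error condition (H2), and continuity of $\calE$ gives (H3), and then invokes the abstract convergence theorem of Ochs, Chen, Brox and Pock \cite[Theorem 3.7]{Ochs2014iPiano}, which is tailored precisely to inertial/extrapolated schemes whose subgradient bound couples two consecutive increments. What you have done is unfold that abstract theorem and reprove it in this specific setting: uniformized K\L{} property on the compact set $\Upsilon$ where $\calH\equiv\zeta$, the concavity-plus-K\L{} chain, a Young-type splitting to decouple $\|x^k-x^{k-1}\|_2$ from $\|x^{k-1}-x^{k-2}\|_2$, and a telescoping sum. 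Your approach buys a self-contained proof and makes visible exactly where the two-step structure of \eqref{eq:relative-error-condition} enters; the paper's approach buys brevity and delegates that bookkeeping to a citation. Both are legitimate, and your identification of the two-step coupling as the main technical wrinkle is exactly right.

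One detail to tighten: the specific inequality $\sqrt{ab}\leq\tfrac{a}{4}+b$ applied with $a=\Gamma_3\|x^k-x^{k-1}\|_2+\Gamma_2\|x^{k-1}-x^{k-2}\|_2$ leaves a term $\tfrac{\Gamma_3}{4}\|x^k-x^{k-1}\|_2$ on the right-hand side, which cannot be absorbed into the left when $\Gamma_3\geq 4$ (and $\Gamma_3$ is typically large here, scaling with $\sqrt{n}$). You need the weighted form $\sqrt{ab}\leq\tfrac{a}{2\delta}+\tfrac{\delta b}{2}$ with $\delta$ chosen large enough that $\tfrac{\Gamma_3}{2\delta}$ and $\tfrac{\Gamma_2}{2\delta}$ are both at most, say, $\tfrac18$; after absorbing the $\|x^k-x^{k-1}\|_2$ term to the left you then get a coefficient strictly less than one on $\|x^{k-1}-x^{k-2}\|_2$, which is what the telescoping requires. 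Your final displayed inequality has the correct form, so this is a matter of justifying the constants rather than a gap in the argument.
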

\begin{proof}
	We use the abstract convergence theorem in \cite{Ochs2014iPiano} to show the results. Lemma \ref{lem-bound-sufficient-decrease}(i) (ii), Lemma \ref{lem-relative-error-condition} and the continuity of the function  $\calE$  give (H1), (H2) and (H3) required by the abstract convergence theorem  \cite[Theorem 3.7]{Ochs2014iPiano}. Therefore, the sequence $\set{x^{k}}$ converges to an $\overline x$ which is a stationary point of $\calE$ by Lemma \ref{lemma_stationary_point}. Moreover, $\sum_{k=0}^{\infty} \| x^{k} - x^{k-1} \|_{2}< \infty$ holds.
\end{proof}

\begin{remark}
	At least a broad class of objective functions $\calE$ satisfy K\L ~property assumption on $\mathcal{H}$ in Theorem \ref{thm-global-convergence}. If $\calE$ is a semialgebraic function, then $\mathcal{H}$ is also semialgebraic  and thus has K\L ~property.
	
\end{remark}

\begin{remark}
	If we further assume that the function $\varphi\in \Xi_\sigma$ of $\mathcal{H}(x,u)$ in K\L ~property has the form $\varphi(s)=cs^{1-\theta}, \theta\in [0,1)$, we can get the convergence rate of our FITS$^3$ algorithm by a similar analysis with \cite{Attouch2009convergence,articlewen}. To save the space, we omit the details.
\end{remark}

\section{Numerical experiments}
\label{sect_num}
In this section, we show the performance of the  FITS$^3$ algorithm on the group sparse optimization model with a least-squares fidelity. We use the potential functions: $\psi(t)=t^q,0<q<1$.  All the experiments were performed on a personal laptop with Windows 10, 64-bit, 16GB memory, Intel(R) Core(TM) i5-10210U CPU @ 1.60GHz 2.11 GHz, and MATLAB R2016a.

We give a general experiment setting. Unless otherwise specified, we use this setup throughout this section. We set $n=1024$, $m=512$. The ground truth $\underline{x}\in \mathbb{R}^{n}$ has a group structure with the group size $l=16$, that is, the original signal $\underline{x}$ is split equally into $64$ groups. Let $\underline{\mathcal{S}}$ denote the number of nonzero groups, then the sparsity level is $\underline{\mathcal{S}}/64$. The $\underline{\mathcal{S}}$ groups are randomly selected, {whose entries are i.i.d. Gaussian random variables}, and the remaining groups are all set to be zeros. The matrix $B\in\RR^{m\times n}$ is an i.i.d. Gaussian matrix and $A=(orth(B'))'$ in Matlab code.
The noisy simulated observation is
$
b = A \underline{x} + 0.001*\text{randn}(m,1).
$

In the proposed FITS$^3$,
the regularization parameter $\alpha$ is selected by the rule
$
\alpha=5\times 10^{-4}*\alpha_{\text{max}} \; \text{and}\; \alpha_{\text{max}}=\max\{\|A_{\slg_1}^\top b\|_2,  \|A_{\slg_2}^\top b\|_2, .., \|A_{\slg_r}^\top b\|_2\},
$
and
we set $\beta=1.0001$, $\tau=0.2$.
The extrapolation parameter $t_k$ is set as:
\begin{equation}
	\nonumber
	t_k=\frac{a_{k-1}-1}{a_k},\quad a_{k+1}=
	\begin{cases}
		\frac{1+\sqrt{1+4a_k^2}}{2}\quad&\text{if }0\leq k\leq300\\
		a_{301}\quad&\text{if }k>300
	\end{cases},\quad a_{-1}=a_{0}=1.
\end{equation}
The initial point $x^{0}$ is chosen
as the approximate solution of the $\ell_{1}$ regularization, which is solved by ADMM (\cite{Wu2010Augmented}) in very few iterations with random initialization.
The algorithm is stopped, when the following condition is satisfied
\begin{equation*}
	\frac{\| x^{k}-x^{k-1}\|_{2}}{\|x^{k-1}\|_{2}}\leq \text{Tol},
\end{equation*}
with $\text{Tol}=5*10^{-5}$.
We take MAXit=300.
To evaluate the recovery quality, define the following relative error
\begin{equation}
	\nonumber
	\epsilon_{\text{rel}}=\frac{\| x^{\star}-\underline{x}\|_{2}}{\|\underline{x}\|_{2}}.
\end{equation}
The recovered signal $x^{\star}$ is regarded as successful if the relative error less than $0.01$. The success rate is the ratio of success over $50$ trials in this paper.

We mainly test Algorithm~\ref{alg_accelerated_ITS3} with $p=2$ for model~\eqref{eq:object-func_00}, and record the experiment results in various aspects in Subsection~\ref{sect_p2}.
The test on Algorithm~\ref{alg_accelerated_ITS3} with $p=1$ is simply reported in Subsection~\ref{set_p}.

\subsection{Test on Algorithm~\ref{alg_accelerated_ITS3} with $p=2$}\label{sect_p2}

\subsubsection{The convergence and extrapolation effect of FITS$^3$}
\label{sect_performance}
We first test the convergence and extrapolation effect of FITS$^3$ with $p=2$.
For this, we set $q=0.5$,
the number of nonzero groups $\underline{\mathcal{S}}=12$, i.e., the sparsity level is $18.75\%$. The recovery result is shown in Fig. \ref{fig:singal_recovery0} (a) and the recovery error $\left(\frac{\| x^{k}-\underline{x}\|_{2}}{\|\underline{x}\|_{2}}\right)$ is  shown in Fig. \ref{fig:singal_recovery0} (b).
It can be seen that FITS$^3$ performs well on the sparse signal recovery problem.
\begin{figure}[htbp]
	\centering
	\begin{tabular}{c@{\hspace{1mm}}c@{\hspace{1mm}}c@{\hspace{1mm}}}
		\subfloat[]
		{\includegraphics[width=0.32\textwidth]{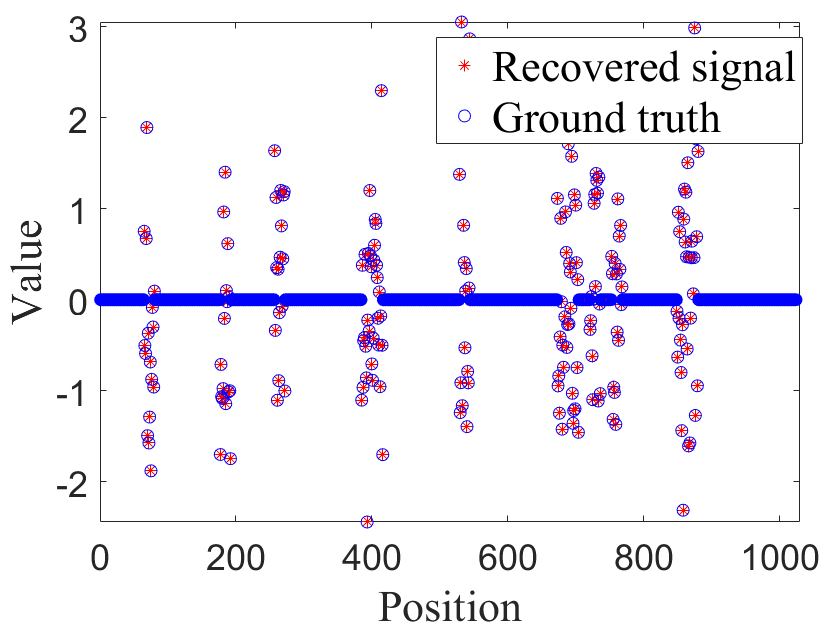}}&
		\subfloat[]
		{\includegraphics[width=0.32\textwidth]{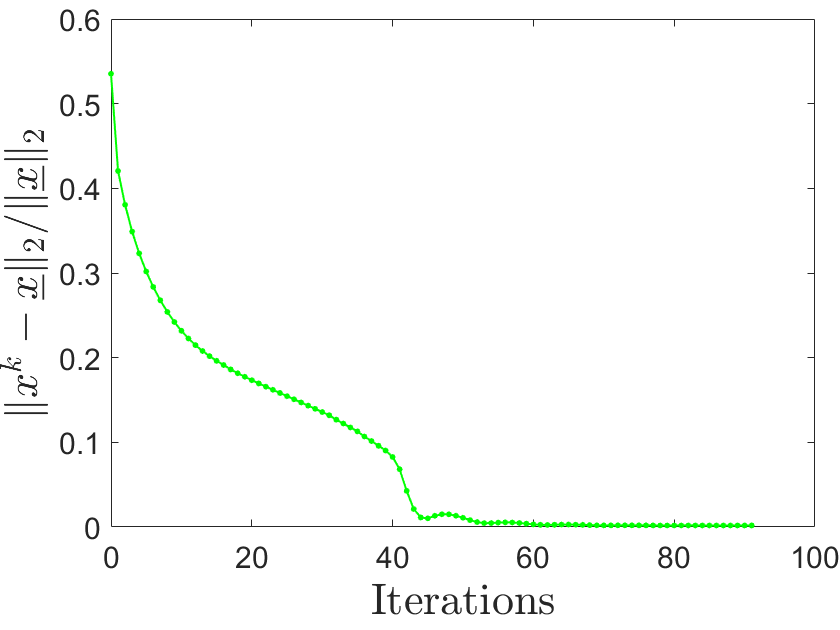}}&
		\subfloat[]
		{\includegraphics[width=0.32\textwidth]{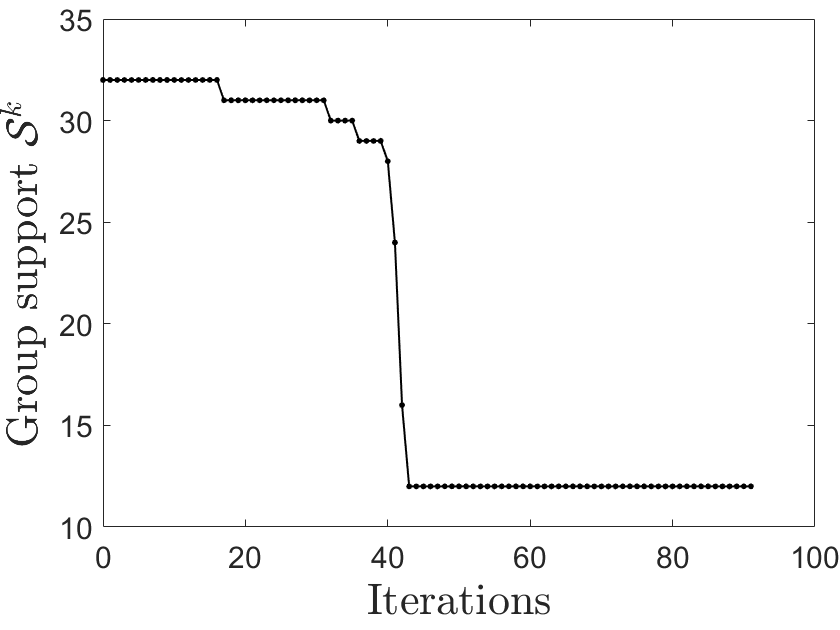}}\\
		\subfloat[]
		{\includegraphics[width=0.32\textwidth]{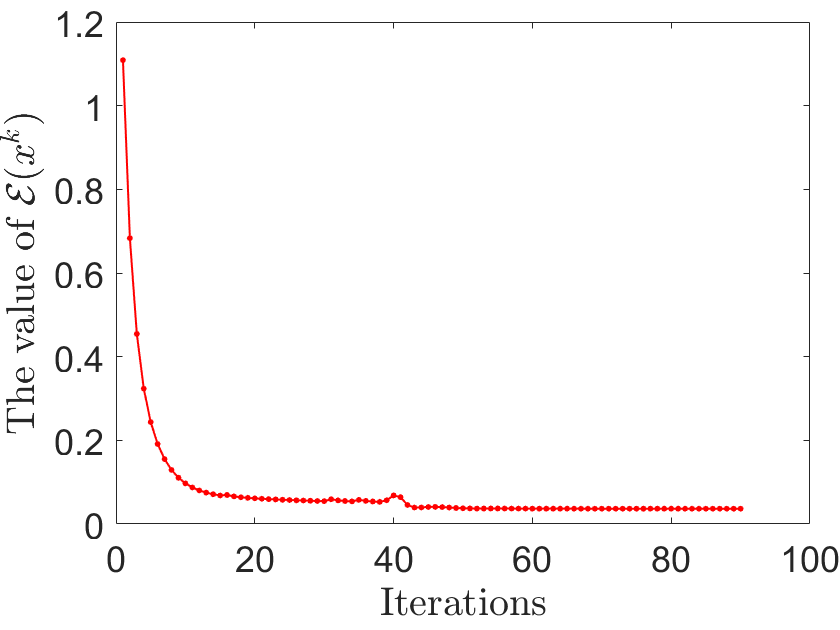}}&
		\subfloat[]
		{\includegraphics[width=0.32\textwidth]{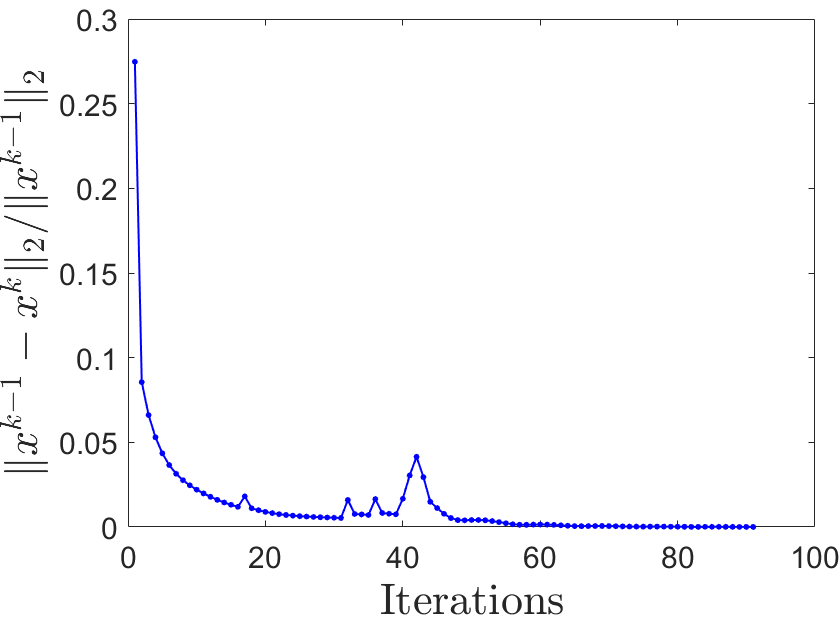}}&
		\subfloat[]
		{\includegraphics[width=0.32\textwidth]{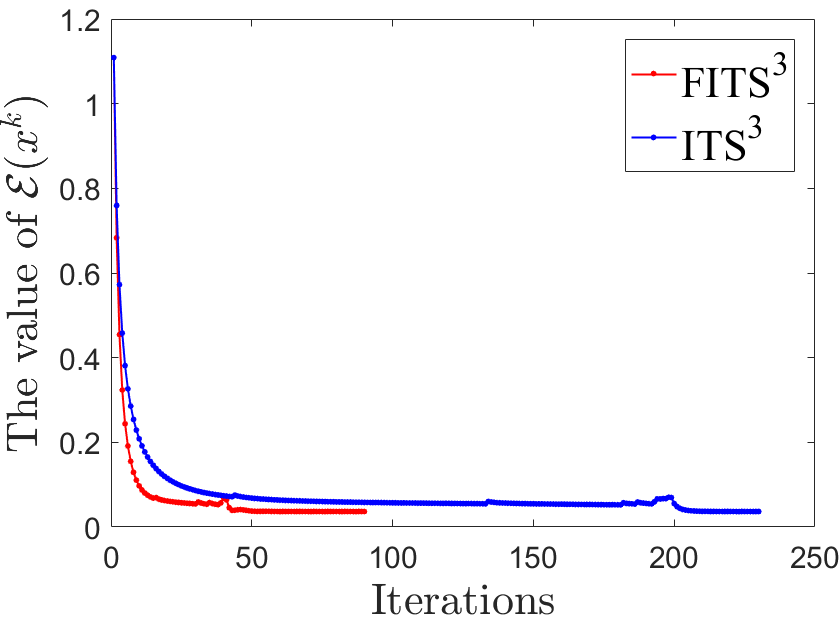}}
		\\		
	\end{tabular}
	\caption{\footnotesize Test on FITS$^3$ with $p=2$. (a) The recovery by FITS$^3$. (b)(c)(d)(e) The convergence justification of FITS$^3$. (f) The speed up effect of  extrapolation.}
	\label{fig:singal_recovery0}
\end{figure}

Next, we show the convergence behavior of the FITS$^3$ algorithm. 
Experimental results are reported in Fig. \ref{fig:singal_recovery0} (b)-(e). The iteration points become closer and closer to the ground truth as shown in Fig \ref{fig:singal_recovery0} (b). The group support set shown in Fig. \ref{fig:singal_recovery0} (c) decreases monotonically and converges, which is consistent with the conclusion in \eqref{eq_decrease_mont}. The sequences of $\calE(x^{k})$ and the relative error $\left(\frac{\| x^{k-1}-x^{k}\|_{2}}{\|x^{k-1}\|_{2}}\right)$  shown in  Fig. \ref{fig:singal_recovery0} (d)(e) converge, although oscilate slightly, which are consistent with our theoretical results. 

We show the speed up effect of  extrapolation. When the extrapolation parameter $t=0$, the FITS$^3$ algorithm degenerates to the  algorithm without extrapolation (ITS$^3$). We run ITS$^3$  and  FITS$^3$ under the same settings. The sequence of objective function value $\calE(x^k)$ of FITS$^3$ and ITS$^3$ are shown in Fig. \ref{fig:singal_recovery0} (f). It can be seen that the value of $\calE(x^k)$ of FITS$^3$ decreases and converges faster and FITS$^3$ requires fewer iteration steps to reach the stopping condition. 


\subsubsection{Choice of $q$}\label{sect_q}
In this subsection, we discuss the choice of $q$. We test the FITS$^3$ algorithm with different $q\in\set{0.1, 0.3, 0.5, 0.7, 0.9}$. The success rates of FITS$^3$ with different $q$ are reported in Fig. \ref{fig:q}(a). It shows that under the general setting,
the success rates of FITS$^3$ with $q=0.3$ and $q=0.5$ are higher than others.
In the following experiments, we take $q=0.5$.
\begin{figure}[htbp]
	\centering
	\begin{tabular}{c@{\hspace{0mm}}c@{\hspace{0mm}}c@{\hspace{0mm}}c@{\hspace{0mm}}}
		\subfloat[]
		{\includegraphics[width=0.24\textwidth]{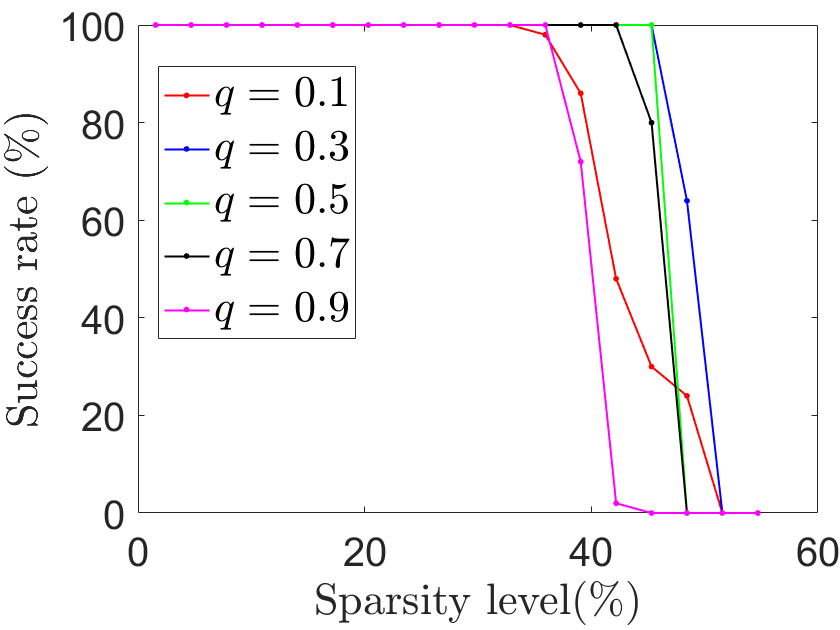}}&
		\subfloat[]
		{\includegraphics[width=0.24\textwidth]{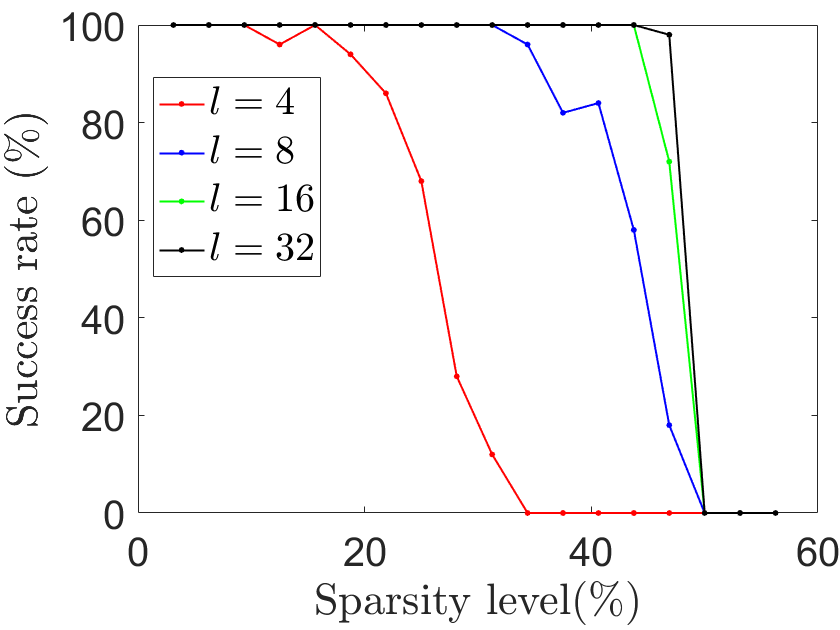}}&
		\subfloat[]
		{\includegraphics[width=0.24\textwidth]{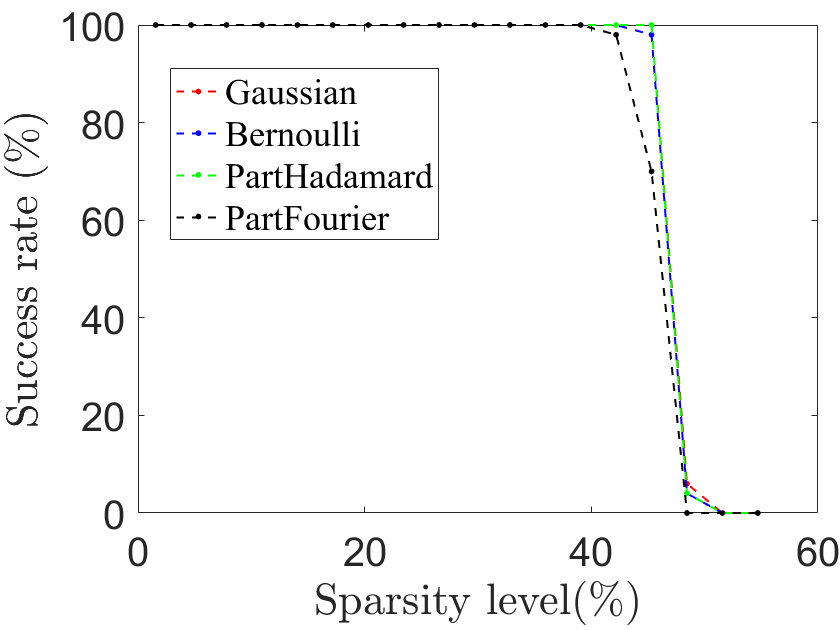}}&	
		\subfloat[]
		{\includegraphics[width=0.24\textwidth]{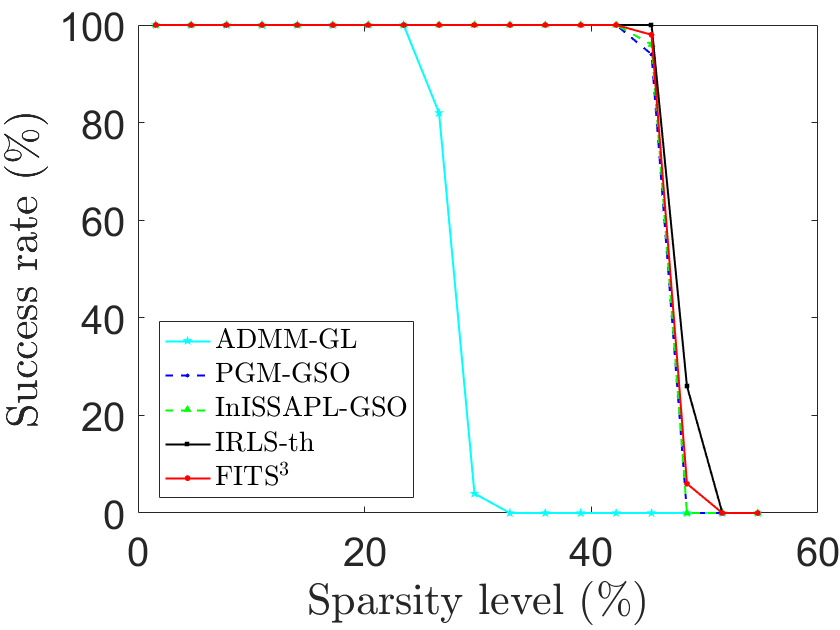}}\\
	\end{tabular}
	\caption{\footnotesize Success rate tests and comparisons on FITS$^3$ with $p=2$. (a)(b)(c) The success rates of FITS$^3$ for group sparse minimization models with different $q$, different group sizes and different types of measurement matrix $A$, respectively.  (d) Comparison on success rates between non-Lipschitz group sparse  recovery algorithms and ADMM-GL algorithm.}
	\label{fig:q}
\end{figure}

\subsubsection{Adaptability to different group sizes and different types of $A$}\label{set_groupsize}
In this subsection, we test the sensitivity of FITS$^3$  with $q=0.5$ on group sizes and types of $A$ in terms of success rate.
We first show the adaptability of FITS$^3$ on group sizes.
We test FITS$^3$ for four problems with different group sizes $l\in\set{4,8,16,32}$. 
The success rates are shown in Fig.  \ref{fig:q} (b). It can be seen that the larger the group size, the higher the success rate.

We next test the sensitivity of FITS$^3$ on different measurement matrices $A$. We use matrix $A$ generated from $A=(orth(B'))'$ in Matlab, where $B$ is a random Gaussian
matrix, random Bernoulli matrix, random PartHadamard matrix or random PartFourier matrix. 
Success rates of recoveries by FITS$^3$ are presented in Fig. \ref{fig:q} (c). It can be seen that FITS$^3$ performs consistently well in problems with different measurement matrices. 
%
%

\subsubsection{Comparisons}\label{set_comp}
In this subsection, we compare the performance of FITS$^3$ with some existing algorithms solving non-Lipschitz $\ell_{p,q}$ regularization models, including 
PGM-GSO (\cite{hu2017group}), InISSAPL-GSO (\cite{xue2019efficient}) and IRLS-th (\cite{feng2020l2}). We also compare it to the ADMM-GL (\cite{boyd2011distributed}) solving the convex  group lasso model ($\ell_2$-$\ell_{2,1}$ model) (\cite{yuan2006model}), which is a baseline algorithm for group sparse recovery. The PGM-GSO, InISSAPL-GSO, IRLS-th and our FITS$^3$ are used to solve the non-Lipschitz $\ell_2$-$\ell_{2,q}$ minimization model.
The parameters and stopping conditions of the compared algorithms follow their default settings. We set $q=0.5$ in the PGM-GSO, InISSAPL-GSO and IRLS-th since these algorithms give their best results at $q=0.5$.  We compare these algorithms in terms of the success rate, recovery accuracy and CPU cost.

Fig. \ref{fig:q} (d) shows the comparison on the success rates of these algorithms under the general setting for recovery problems with different sparsity levels. It can be observed that the recovery success rates of FITS$^3$, PGM-GSO, InISSAPL-GSO, IRLS-th are very close and all higher than ADMM-GL.

In Table~\ref{tab_compp1}, we compare the CPU costs of these algorithms for recovery problems with different scales and sparsity levels. In particular, we set $n\in\set{1024,4096,8192,12288,16384}$, $m=n/2$, the group size $l=16$ and the sparsity levels $5\%$, $10\%$, $15\%$ and $20\%$. It can be seen that non-Lipschitz methods PGM-GSO, InISSAPL-GSO, IRLS-th and FITS$^3$  reach similar recovery accuracies (better than  the convex ADMM-GL method), while the proposed FITS$^3$ usually takes the least CPU cost.
This efficiency advantage becomes more and more greater as the problem scale $n$ increases.
\subsection{Test on Algorithm~\ref{alg_accelerated_ITS3} with $p=1$
}\label{set_p}
Here we test the performance of the FITS$^3$ algorithm for signal recovery problems with intra-group sparsity structures.
We set $n=1024$, $m=512$, the group size $l=32$, and the number of nonzero group $\underline{\mathcal{S}}=16$.
To simulate the intra-group sparsity structures, we denote the number of nonzero elements of each nonzero group as $\underline{s}$, and consider $\underline{s}=6$ and 9.
Fig. \ref{fig:singal_recoveryp1p2} shows the recovery results of FITS$^3$ with $p=1$, $q=0.5$ and $p= 2$, $q=0.5$.
We  observe that for such signal recovery problems with intra-group
sparsity structures, Algorithm~\ref{alg_accelerated_ITS3} with $p=1$ works quite well and much better than Algorithm~\ref{alg_accelerated_ITS3} with $p=2$.



\begin{figure}[htbp]
	\centering
	\begin{tabular}{c@{\hspace{1mm}}c@{\hspace{1mm}}c@{\hspace{1mm}}c@{\hspace{1mm}}}
		\subfloat[{\tiny $p=1, \underline{s}=6: \epsilon_{\text{rel}}=0.0025$}]
		{\includegraphics[width=0.24\textwidth]{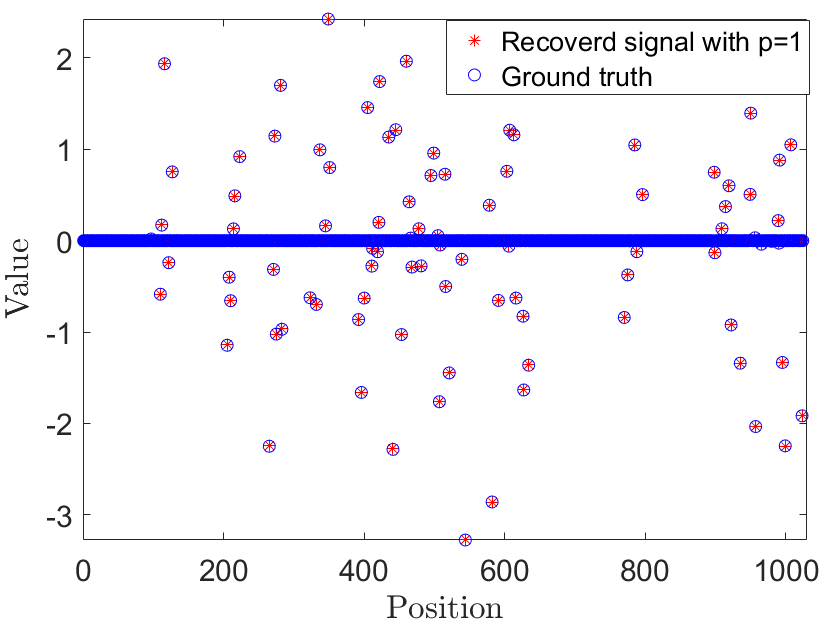}}&
		\subfloat[{\tiny $p=2, \underline{s}=6: \epsilon_{\text{rel}}=0.1212$}]
		{\includegraphics[width=0.24\textwidth]{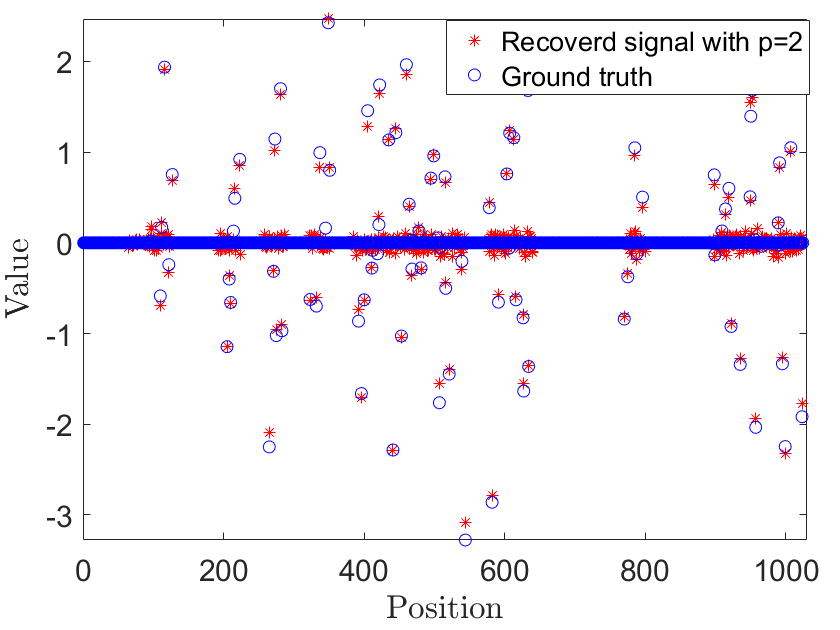}}		
		\subfloat[{\tiny $p=1, \underline{s}=9 : \epsilon_{\text{rel}}=0.0030$}]
		{\includegraphics[width=0.24\textwidth]{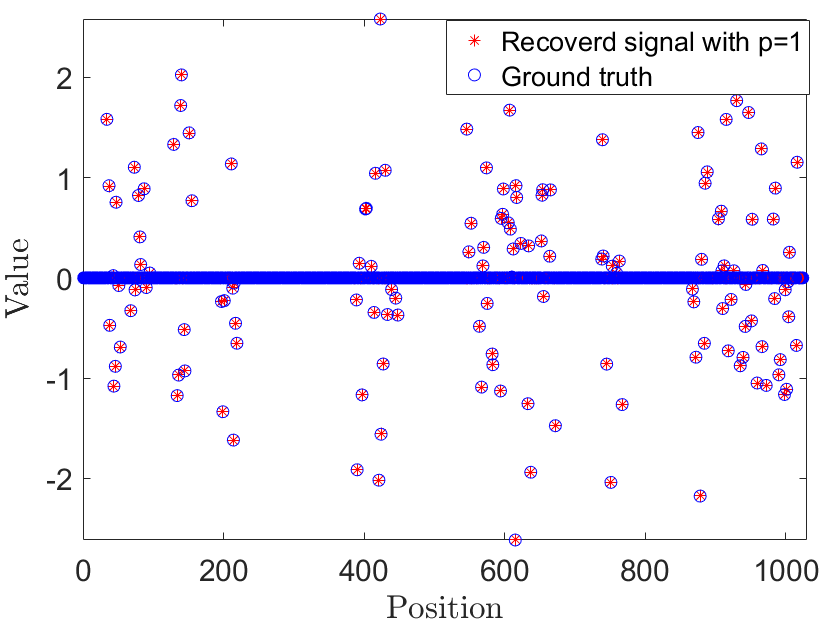}} &
		\subfloat[{\tiny $p=2, \underline{s}=9: \epsilon_{\text{rel}}=0.1525$}]
		{\includegraphics[width=0.24\textwidth]{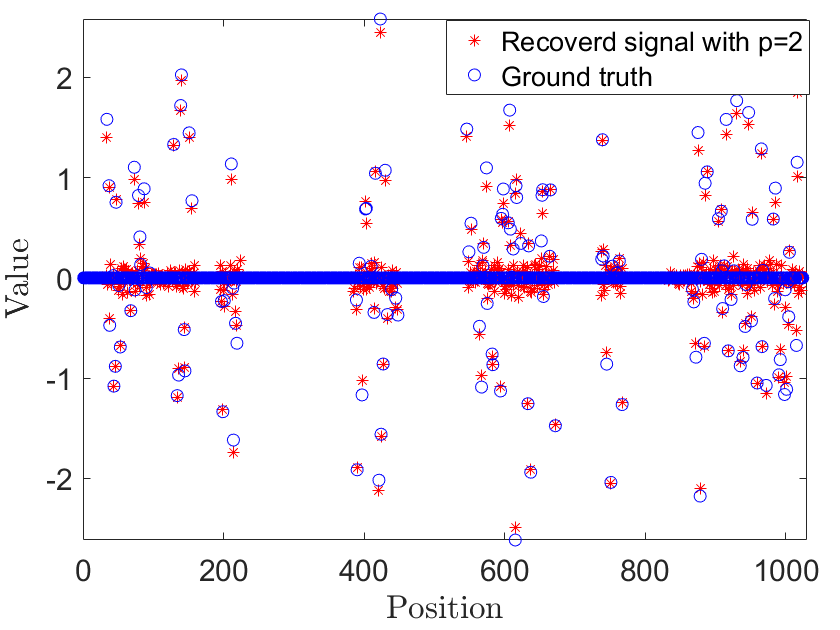}}
	\end{tabular}
	\caption{\footnotesize Signal recovery by FITS$^3$ with $p=1$ and $p=2$ for the signal recovery problems with intra-group sparsity structures. (a)(b) The intra-group sparsity level $\underline{s}/l=6/32$. (c)(d) The intra-group sparsity level $\underline{s}/l=9/32$.}
	\label{fig:singal_recoveryp1p2}
\end{figure}
\section{Conclusions}\label{sec:conclu}
In this paper, we proposed a new simple algorithm named as FITS$^3$ for a general non-convex non-Lipschitz group sparse regularization model with a least-squares fidelity, by integrating thresholding operation, group support-and-scale shrinkage, linearization and  extrapolation techniques.
This iterative algorithm does not need to solve any linear or nonlinear system, and in two most important cases $p=1$ and $p=2$ it contains only simple operations like matrix-vector multiplication and soft thresholding. It thus  is especially suitable for solving median- or large-scale problems. 
We also established the sequence  convergence  property of the proposed FITS$^3$ under an inexact condition at each iteration, which is applicable to our FITS$^3$ with a general $p\in[1,+\infty).$
Numerical experiments illustrated the advantages of the FITS$^3$, showing great potentials in various group sparse recovery applications in the current era of big data.

\section*{Acknowledgments}

This work was partially supported by National Natural Science Founation of China (Grants 12271273, 11871035), the Key Program (21JCZDJC00220) of Natural Science Foudation
of Tianjin, China.

\begin{landscape}

\begin{table}[tbhp]
	\small
	\caption{\footnotesize Comparisons on relative errors (abbreviated as $\epsilon_{\text{rel}}$) and CPU time (seconds)  between non-Lipschitz group sparse  recovery algorithms and ADMM-GL algorithm for problems with different scales $n\in\set{1024,4096,8192,12288,16384}$. }
	\label{tab_compp1}
	\begin{center}
		\begin{tabular}{|c|c|c|c|c|c|c|c|c|c|c|}
			\hline
			\multicolumn{11}{|c|}{Sparsity level: $5\%$}\\
			\hline
			\multirow{2}{*}{Method}&\multicolumn{2}{c}{$1024$}&\multicolumn{2}{|c|}{$4096$}&\multicolumn{2}{c|}{$8192$}&\multicolumn{2}{c|}{$12288$}&\multicolumn{2}{c|}{$16384$}\\
			\cline{2-11}  &$\epsilon_{\text{rel}}$&time &$\epsilon_{\text{rel}}$&time&$\epsilon_{\text{rel}}$&time&$\epsilon_{\text{rel}}$&time&$\epsilon_{\text{rel}}$&time\\
			\hline
			ADMM-GL     &0.0023 &0.14&0.0022 &2.85  &0.0023 &11.23 &0.0023 &27.93&0.0022 &56.83\\
			PGM-GSO         &0.0013 &0.29&0.0015 &6.25  &0.0014 &25.78 &0.0015 &64.81&0.0013 &132.94\\
			InISSAPL-GSO    &0.0013 &0.06&0.0015 &1.19  &0.0014 &7.49  &0.0015 &18.63&0.0013 &42.67\\
			IRLS-th         &0.0013 &0.06&0.0015 &1.24  &0.0014 &6.89  &0.0015 &21.23&0.0013 &50.74\\
			FITS$^3$   &0.0013 &\bf{0.05}&0.0015 &\bf{0.79}  &0.0014 &\bf{3.95}  &0.0015 &\bf{10.81}&0.0014 &\bf{24.46}\\
			\hline
			\hline
			\multicolumn{11}{|c|}{Sparsity level: $10\%$}\\
			\hline
			\multirow{2}{*}{Method}&\multicolumn{2}{c}{$1024$}&\multicolumn{2}{|c|}{$4096$}&\multicolumn{2}{c|}{$8192$}&\multicolumn{2}{c|}{$12288$}&\multicolumn{2}{c|}{$16384$}\\
			\cline{2-11}  &$\epsilon_{\text{rel}}$&time &$\epsilon_{\text{rel}}$&time&$\epsilon_{\text{rel}}$&time&$\epsilon_{\text{rel}}$&time&$\epsilon_{\text{rel}}$&time\\
			\hline
			ADMM-GL      &0.0023 &0.20&0.0026 &3.98  &0.0026 &15.25 &0.0024 &37.35&0.0026 &65.70\\
			PGM-GSO         &0.0015 &0.34&0.0016 &7.01  &0.0015 &28.57 &0.0015 &68.16&0.0015 &128.31\\
			InISSAPL-GSO    &0.0015 &0.08&0.0016 &2.66  &0.0015 &14.35 &0.0015 &30.29&0.0015 &64.90\\
			IRLS-th         &0.0015 &0.06&0.0016 &1.41  &0.0016 &8.15  &0.0015 &22.22&0.0015 &48.98\\
			FITS$^3$   &0.0015 &\bf{0.05}&0.0016 &\bf{0.93}  &0.0016 &\bf{4.78}  &0.0015 &\bf{12.28}&0.0015 &\bf{25.35}\\
			\hline
			\hline
			\multicolumn{11}{|c|}{Sparsity level: $15\%$}\\
			\hline
			\multirow{2}{*}{Method}&\multicolumn{2}{c}{$1024$}&\multicolumn{2}{|c|}{$4096$}&\multicolumn{2}{c|}{$8192$}&\multicolumn{2}{c|}{$12288$}&\multicolumn{2}{c|}{$16384$}\\
			\cline{2-11}  &$\epsilon_{\text{rel}}$&time &$\epsilon_{\text{rel}}$&time&$\epsilon_{\text{rel}}$&time&$\epsilon_{\text{rel}}$&time&$\epsilon_{\text{rel}}$&time\\
			\hline
			ADMM-GL      &0.0025 &0.23&0.0030 &4.86  &0.0027 &19.82 &0.0028 &47.90&0.0027 &84.27\\
			PGM-GSO         &0.0015 &0.37&0.0015 &6.79  &0.0016 &28.54&0.0015 &66.78&0.0015 &127.97\\
			InISSAPL-GSO    &0.0015 &0.10&0.0015 &7.22  &0.0016 &31.69 &0.0015 &75.88&0.0015 &161.81\\
			IRLS-th         &0.0015 &\bf{0.06}&0.0015 &1.49 &0.0016 &8.51 &0.0015 &24.01&0.0015 &51.25\\
			FITS$^3$   &0.0016 &0.07&0.0015 &\bf{1.06}  &0.0016 &\bf{5.02} &0.0015 &\bf{13.47}&0.0015 &\bf{28.39}\\
			\hline
			\hline
			\multicolumn{11}{|c|}{Sparsity level: $20\%$}\\
			\hline
			\multirow{2}{*}{Method}&\multicolumn{2}{c}{$1024$}&\multicolumn{2}{|c|}{$4096$}&\multicolumn{2}{c|}{$8192$}&\multicolumn{2}{c|}{$12288$}&\multicolumn{2}{c|}{$16384$}\\
			\cline{2-11}  &$\epsilon_{\text{rel}}$&time &$\epsilon_{\text{rel}}$&time&$\epsilon_{\text{rel}}$&time&$\epsilon_{\text{rel}}$&time&$\epsilon_{\text{rel}}$&time\\
			\hline
			ADMM-GL      &0.0028 &0.35&0.0037 &6.61  &0.0032 &24.80 &0.0036 &64.39&0.0034 &108.11\\
			PGM-GSO         &0.0018 &0.32&0.0016 &6.87  &0.0017 &28.47&0.0016 &67.58&0.0016 &125.96\\
			InISSAPL-GSO    &0.0018 &0.43&0.0016 &14.09  &0.0017 &62.12 &0.0016 &156.86&0.0016 &340.62\\
			IRLS-th         &0.0018 &\bf{0.08}&0.0016 &2.04  &0.0017 &10.13 &0.0016 &32.71&0.0016 &78.16\\
			FITS$^3$   &0.0018 &0.09&0.0017 &\bf{1.43}  &0.0017 &\bf{6.13} &0.0017 &\bf{15.80}&0.0016 &\bf{33.10}\\
			\hline
		\end{tabular}
	\end{center}
\end{table}
\end{landscape}

\vskip2mm



          \end{document}